\newif\ifpictures
\numberwithin{equation}{section}
\newtheorem{thm}{Theorem}
\newtheorem{prop}[thm]{Proposition}
\newtheorem{lemma}[thm]{Lemma}
\newtheorem{cor}[thm]{Corollary}
\theoremstyle{definition}
\newtheorem{assumption}[thm]{Assumption}
\newtheorem{example}[thm]{Example}
\newtheorem{remark1}[thm]{Remark}
\newtheorem{openproblem1}[thm]{Open problem}
\newtheorem{definition}[thm]{Definition}
\newenvironment{ex}{\begin{example}\rm}{\hfill$\Box$\end{example}}
\newenvironment{rem}{\begin{remark1}\rm}{\end{remark1}}
\numberwithin{thm}{section}
\newcounter{FNC}[page]
\def\newfootnote#1{{\addtocounter{FNC}{2}$^\fnsymbol{FNC}$%
     \let\thefootnote\relax\footnotetext{$^\fnsymbol{FNC}$#1}}}
\newcommand{\N}{\mathbb{N}}
\newcommand{\R}{\mathbb{R}}
\newcommand{\B}{\mathbb{B}}
\newcommand{\cal}{\mathcal}
\newcommand{\T}{\mathbb{T}}
\newcommand{\HH}{\mathcal{H}}
\newcommand{\mom}{\mathrm{mom}}
\newcommand{\sos}{\mathrm{sos}}
\newcommand{\widephi}{\widehat{\Phi}}
\newcommand{\widea}{\widehat{A}}
\newcommand{\ca}{$C^*$-algebra}
\DeclareMathOperator{\lin}{lin}
\DeclareMathOperator{\inter}{int}
\DeclareMathOperator{\linspan}{span}
\DeclareMathOperator{\diag}{diag}
\DeclareMathOperator{\tr}{tr}
\newcommand{\sym}{\mathcal{S}}
\newcommand{\psd}{\mathcal{S}^{+}}
\title[A semidefinite hierarchy for containment of spectrahedra]{A
semidefinite hierarchy for containment of spectrahedra}
\author{Kai Kellner}
\author{Thorsten Theobald}
\author{Christian Trabandt}
\address{Goethe-Universit\"at, FB 12 -- Institut f\"ur Mathematik,
Postfach 11 19 32, D--60054 Frankfurt am Main, Germany}
\email{\{kellner,theobald,trabandt\}@math.uni-frankfurt.de}
\begin{document}
\begin{abstract} 
A spectrahedron is the positivity region of a linear matrix pencil and thus
the feasible set of a semidefinite program.
We propose and study a hierarchy of sufficient semidefinite conditions to
certify the containment of a spectrahedron in another one.
This approach comes from applying a moment relaxation to a suitable polynomial
optimization formulation.
The hierarchical criterion is stronger than a solitary semidefinite criterion 
discussed earlier by Helton, Klep, and McCullough as well as by the authors.
Moreover, several exactness results for the solitary criterion can be brought
forward to the hierarchical approach.

The hierarchy also applies to the (equivalent) question of checking whether a
map between matrix (sub-)spaces is positive. In this context, the solitary
criterion checks whether the map is completely positive, and thus our results
provide a hierarchy between positivity and complete positivity.
\end{abstract}

\maketitle

\section{Introduction}

Containment problems of convex sets belong to the classical problems in convex
geometry (see, e.g., Gritzmann and Klee for the containment of polytopes
\cite{gritzmann-klee-containment-survey}, Freund and Orlin for containment
problems of balls in balls \cite{freund-orlin-85}, or Mangasarian for
containment of convex sets in reverse-convex sets \cite{Mangasarian2002}). 

In this paper, we consider the containment problem for spectrahedra using the
following common notation.
Let $ \sym_k $ be the set of real symmetric $k\times k$-matrices, $\sym_k^+$
be the set of positive semidefinite $k\times k$-matrices, and $\sym_k[x]$ be
the set of symmetric $k\times k$-matrices with polynomial entries in 
$ x = ( x_1, \ldots ,x_n ) $.
For $ A_0, \ldots, A_n \in \sym_k $, denote by $ A(x) $ the \emph{linear
(matrix) pencil} $A(x) \ = \ A_0 + x_1 A_1 + \cdots + x_n A_n \in \sym_k[x]$.
The set
\begin{equation} \label{eq:spectrahedron1}
  S_A \ = \ \{x \in \R^n \, : \, A(x) \succeq 0 \}
\end{equation}
is called a \emph{spectrahedron}, where $A(x) \succeq 0$ denotes positive
semidefiniteness of the matrix $A(x)$. Our work is intrinsically motivated by
the fact that spectrahedra have become an important class of non-polyhedral
sets due to the availability of fast semidefinite programming solvers. See
\cite{bpt-2013,Boyd2004,Ramana1995,helton-nie-2010,Pataki2000} for general
background on spectrahedra, and their significance in optimization and convex
algebraic geometry. Spectrahedra can be used to represent observables in
quantum information theory~\cite{weis-2011}. From an application point of
view, interest in non-polyhedral, and thus particularly semidefinite, set
containment is stimulated by non-polyhedral knowledge based data
classification (see \cite{jeyakumar-2003,Mangasarian2002}, for semidefinite
classifiers see \cite{jow-2006}).

Given two linear pencils $ A(x) \in\sym_k[x] $ and $ B(x) \in\sym_l[x] $, 
the containment problem for spectrahedra is to decide whether $S_A \subseteq
S_B$. This problem is co-NP-hard \cite{ben-tal-nemirovski-2002,Kellner2012}. 
The study of algorithmic approaches and relaxations has been initiated by
Ben-Tal and Nemirovski~\cite{ben-tal-nemirovski-2002} who investigated the
case where $S_A$ is a cube (``matrix cube problem''). 
Helton, Klep, and McCullough~\cite{Helton2010} studied containment problems of
matricial positivity domains (which live in infinite-dimensional spaces), and
from this they derive a \emph{semidefinite} sufficient criterion for deciding
containment of spectrahedra. In~\cite{Kellner2012}, the authors of the present
paper provided a streamlined presentation of the results on spectrahedral
containment in~\cite{Helton2010} and showed that in several cases the
sufficient criterion is exact.

 From an operator algebra point of view (such as 
in~\cite{Helton2010,Klep2011}), this semidefinite feasibility condition states
that a natural linear map $\Phi_{AB}$ between the subspaces 
$\mathcal{A} = \linspan(A_0, \ldots, A_n)$ and 
$\mathcal{B} = \linspan(B_0, \ldots, B_n)$ is \emph{completely positive} (as
defined in Section~\ref{sec:pos_maps}). These maps also appear in the context
of Positivstellens\"atze in non-commuting variables; see \cite{Helton2012}.
Building upon these results, in the current paper we go one step further, 
presenting a hierarchy of monotone improving sufficient semidefinite
optimization problems to decide the containment question. 

Our point of departure is to formulate the containment problem in terms of
polynomial matrix inequalities (PMI). We use common relaxation techniques (by
Kojima \cite{Kojima2003}, Hol and Scherer~\cite{Hol} as well as Henrion and
Lasserre \cite{Henrion2006}) to derive a (sufficient) semidefinite hierarchy
for the containment problem. The semidefinite hierarchy provides a much more
comprehensive approach towards the containment problems than the
aforementioned sufficient criterion (see Theorem~\ref{thm:implications1}).
We also discuss a variant of the semidefinite hierarchy which avoids
additional variables (see Section~\ref{sec:robust}).

\medskip

\noindent
{\bf Main contributions.}
 1. Based on polynomial matrix inequalities, we provide a hierarchy of
  sufficient semidefinite criteria for the containment problem and prove that
  the sequence of optimal values converges to the optimal value of the
  underlying polynomial optimization problem; see
  Theorem~\ref{thm:convergence}.

2. Any relaxation step of the hierarchy yields a sufficient criterion for the
  containment problem. We prove that each of these sufficient criteria is at
  least as powerful as the one in~\cite{Helton2010,Kellner2012}, in the sense
  that whenever the criterion of~\cite{Helton2010,Kellner2012} is satisfied,
  then also the criterion from any of the relaxation steps of the hierarchy
  is satisfied; see Theorem~\ref{thm:implications1}. In particular, this
  already holds for the criterion coming from the initial relaxation step.
  This allows to carry all exactness results from~\cite{Kellner2012} forward
  to our new hierarchical approach; see Corollaries~\ref{co:exactness}
  and~\ref{co:scaling}.

3. Application of the hierarchy to the problem of deciding whether a linear
  map between matrix subspaces is positive gives a monotone semidefinite
  hierarchy of sufficient criteria for this problem.

4. We demonstrate the effectiveness of the approach by providing
   numerical results for several containment problems and radii computations.

\medskip

We remark that the containment question is intimately linked to the
computation of inner and outer radii of convex sets. (See Gritzmann and
Klee~\cite{Gritzmann1992,gritzmann-klee-93} for the polytope case). Moreover,
Bhardwaj, Rostalski, and Sanyal~\cite{Bhardwaj2011} study the related question
of whether a spectrahedron is a polytope. In~\cite{Gouveia2013}, Gouveia,
Robinson, and Thomas reduced the question of computing the positive
semidefinite rank of nonnegative matrices to a containment problem involving
projections of spectrahedra.

The paper is structured as follows. In Section~\ref{sec:prelim}, we collect
some notations and concepts on spectrahedra and polynomial matrix
inequalities. The semidefinite hierarchy, as well as a variant avoiding
additional variables, is introduced in Section~\ref{se:hierarchy}. 
In Section~\ref{se:positivity}, we connect the hierarchy to (complete)
positivity of operators and the sufficient semidefinite criteria from
\cite{Helton2010,Kellner2012}.
Section~\ref{sec:application} discusses applications of radii computations
and provides numerical results.

\section{Preliminaries}
\label{sec:prelim}

Throughout, general matrix polynomials will be denoted by $G(x)\in\sym_k[x]$,
while linear matrix pencils will usually be denoted by $A(x)\in \sym_k[x]$ or
$B(x)\in \sym_l[x]$. Let $I_n$ abbreviate the $n \times n$ identity matrix,
and let $E_{ij}$ denote the matrix with a one in position $(i,j)$ and zeros 
elsewhere. By $\B_r(p)$, we denote the (closed) Euclidean ball with center $p$
and radius $r>0$.

\subsection{Spectrahedra and semidefinite programming}
\label{sec:spec}

Given a linear pencil 
\begin{equation} \label{eq:linearpencil}
  A(x) \ = \ A_0 + \sum_{p=1}^n x_p A_p \in \mathcal{S}_k[x] \quad
  \text{ with } A_p = (a^p_{ij}) \, , \quad 0 \le p \le n \, ,
\end{equation}
the spectrahedron $S_A = \{x \in \R^n \, : \, A(x) \succeq 0\}$ contains the
origin if and only if $ A_0 $ is positive semidefinite. Since the class of
spectrahedra is closed under translation, this can always be achieved (assuming
that $S_A$ is nonempty). Indeed, there exists a point $ x'\in\R^n $ such that 
$ A(x')\succeq 0 $ if and only if the origin is contained in the set 
$ \left\{ x\in\R^n\ :\ A(x+x') \succeq 0 \right\} $. In particular, the
constant term in the linear pencil $A'(x) = A(x+x')$ is positive semidefinite. 

The equivalence between positive definiteness of $A_0$ and the origin being an
interior point is not true. Moreover, in general, the interior of $S_A$ does
not coincide with the positive definiteness region of the pencil. However, if
the spectrahedron $S_A$ has nonempty interior (or, equivalently, $S_A$ is
full-dimensional), then there exists a \textit{reduced} linear pencil that is
positive definite exactly on the interior of $S_A$. 

\begin{prop}[{{\cite[Corollary 5]{Ramana1995}}}] \label{prop:reduced_pencil}
Let $ S_A = \{ x\in\R^n\ : \ A(x) \succeq 0 \} $ be full-dimensional and let
$N$ be the intersection of the nullspaces of $A_i,\ i=0,\ldots,n$. If $V$ is a
basis of the orthogonal complement of $N$, then 
$ S_A = \{ x\in\R^n\ : \ V^T A(x) V \succeq 0 \} $ and the interior of
$S_A$ is $ \inter (S_A )= \{ x \in \R^n \ :\ V^T A(x) V \succ 0 \} $.
\end{prop}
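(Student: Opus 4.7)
My plan is to reduce the proposition to the standard fact that a linear pencil whose coefficient matrices have trivial common kernel is strictly positive definite on the (nonempty) interior of its spectrahedron. The crucial preliminary observation is that $N \subseteq \ker A(x)$ for every $x \in \R^n$, since for $v \in N$ each $A_p v = 0$ and hence $A(x) v = 0$.

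First I would complete $V$ (chosen orthonormal without loss of generality, since replacing a basis by any other changes $V^{T}A_pV$ only by a congruence) to an orthogonal matrix $P = [\,V \mid U\,]$ of $\R^k$, with $U$ an orthonormal basis of $N$. Because $A(x) U = 0$ and $A(x)$ is symmetric, the congruent matrix $P^T A(x) P$ takes the block-diagonal form $\mathrm{diag}(V^T A(x) V,\,0)$. Congruence by an invertible matrix preserves positive semidefiniteness, so $A(x) \succeq 0$ if and only if $V^T A(x) V \succeq 0$, which is the first identity $S_A = \{x : V^T A(x) V \succeq 0\}$.

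For the interior characterization set $\tilde A(x) := V^T A(x) V$ with coefficients $\tilde A_p := V^T A_p V$. The key step is to show that $\bigcap_{p=0}^{n} \ker \tilde A_p = \{0\}$. Indeed, if $\tilde A_p w = 0$ for all $p$, then $A_p V w \in \ker V^T = (\mathrm{range}\,V)^\perp = N$; on the other hand, each $A_p$ is symmetric with $N \subseteq \ker A_p$, so $\mathrm{range}(A_p) \subseteq N^\perp$. Therefore $A_p V w \in N \cap N^\perp = \{0\}$ for every $p$, meaning $V w$ lies in the common kernel $N$ as well as in $N^\perp$, forcing $V w = 0$ and thus $w = 0$ since $V$ has full column rank. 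With this triviality in hand, the standard argument closes everything out: the inclusion $\{x : \tilde A(x) \succ 0\} \subseteq \inter S_A$ is immediate from continuity, and conversely, if some $x_0 \in \inter S_A$ admitted a null vector $v \neq 0$ of $\tilde A(x_0)$, then the perturbations $x_0 \pm t e_p \in S_A$ would force $v^T \tilde A_p v = 0$ and then, via the null-vector property of PSD matrices, $\tilde A_p v = 0$ for each $p$ (and similarly for $\tilde A_0$), contradicting the trivial common kernel.

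The main obstacle is precisely the triviality of the common kernel of the reduced coefficients $\tilde A_p$; once that is established, the block decomposition and the standard interior-of-spectrahedron perturbation argument are routine.
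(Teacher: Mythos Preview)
Your argument is correct. The block decomposition via the orthogonal complement, the congruence reduction to an orthonormal $V$, the verification that the reduced coefficients $\tilde A_p$ have trivial common kernel, and the perturbation argument for the interior are all sound; in particular, the step deducing $\tilde A_p v = 0$ from $v^T \tilde A_p v = 0$ via the null-vector property of the positive semidefinite matrix $\tilde A(x_0) + t\tilde A_p$ is the right move, and $\tilde A_0 v = 0$ then follows from $\tilde A(x_0)v = 0$.

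As for comparison: the paper does not supply its own proof of this proposition but simply quotes it from Ramana and Goldman (Corollary~5 in the cited reference). Your write-up is essentially the standard proof one would give of that corollary, so there is nothing to contrast on the level of strategy.
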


Furthermore, the spectrahedron $S_A$ contains the origin in its interior if
and only if there is a linear pencil $A'(x)$ with the same positivity
domain such that $A'_0 = I_k$; see \cite{helton-vinnikov-2007}.
To simplify notation, we sometimes assume that $A(x)$ is of this form and
refer to it as a \textit{monic} linear pencil, i.e., $A_0 = I_k$.

In addition, we occasionally assume the matrices $A_1, \ldots, A_n$ to be
linearly independent. This assumption is not too restrictive. 
In order to see this, denote by $ \tilde{A}(x) = A(x) - A_0 $  the pure-linear
part of the linear pencil $A(x)$.
Recall the well-known fact that the \emph{lineality space} $L_A$ of a
spectrahedron $S_A$, i.e.,\ the largest linear subspace contained in $S_A$, is
the set $ L_A = \{ x\in\R^n\ :\ \tilde{A}(x) = 0 \} $; 
see~\cite[Lemma 3]{Ramana1995}. 
Obviously, if the coefficient matrices $A_1,\ldots,A_n$ are linearly
independent, then the lineality space is zero-dimensional, i.e., 
$ L_A = \{ 0 \} $. 
In particular, this is the case whenever the spectrahedron $S_A$ is bounded 
(and $A_0 \succeq 0$); see~\cite[Proposition 2.6]{Helton2010}. Conversely, if
there are linear dependencies in the coefficient matrices, then we can simply
reduce the containment problem to lower dimensions.

\begin{prop} \label{prop:lineality}
Let $ A(x) \in\sym_k[x] $ and $ B(x) \in\sym_l[x] $ be linear pencils such that
$S_A$ is non-empty.
\begin{enumerate}
  \item 
  $L_A = \{ 0 \}$ if and only if $A_1,\ldots,A_n$ are linearly independent.
  \item 
  If $ S_A \subseteq S_B $, then $ L_A \subseteq L_B $.
  \item
  If $ L_A \subseteq L_B $, then $ S_A \subseteq S_B $ holds 
  if and only if $ S_{A'} \subseteq S_{B'} $ holds, 
  where $S_{A'} = S_A \cap L_A^{\bot} $
  and $S_{B'} = S_B \cap L_A^{\bot}$.
\end{enumerate}
\end{prop}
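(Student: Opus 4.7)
The plan is to handle the three parts in order, using only the defining relation $L_A = \{x \in \R^n : \tilde{A}(x) = 0\}$ together with elementary properties of the PSD cone.

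For part (1), I would just unpack the definition: $L_A = \{0\}$ is the statement that $\sum_{p=1}^n x_p A_p = 0$ forces $x = 0$, which is exactly linear independence of $A_1, \ldots, A_n$. No further work is needed.

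For part (2), the plan is to exploit that $L_A$ is a direction of unboundedness for every point of $S_A$. Pick any $x_0 \in S_A$ (which exists by assumption) and any $\ell \in L_A$. Since $A(x_0 + t\ell) = A(x_0) + t\tilde{A}(\ell) = A(x_0) \succeq 0$, the whole line $\{x_0 + t\ell : t \in \R\}$ lies in $S_A$, hence in $S_B$. Therefore $B(x_0) + t\tilde{B}(\ell) \succeq 0$ for all $t \in \R$. The PSD cone contains no line, so this forces $\tilde{B}(\ell) = 0$, i.e., $\ell \in L_B$. Concretely, if $\tilde{B}(\ell) \neq 0$ then, being a nonzero symmetric matrix, it has a nonzero real eigenvalue $\lambda$ with unit eigenvector $v$, and $v^T(B(x_0) + t\tilde{B}(\ell))v = v^T B(x_0) v + t\lambda$ becomes negative for $t$ of appropriate sign and magnitude, a contradiction.

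For part (3), I would use the orthogonal decomposition $\R^n = L_A \oplus L_A^\perp$. The direction $S_A \subseteq S_B \Rightarrow S_{A'} \subseteq S_{B'}$ is immediate from the definitions: $S_{A'} = S_A \cap L_A^\perp \subseteq S_B \cap L_A^\perp = S_{B'}$. For the converse, assume $S_{A'} \subseteq S_{B'}$ and take any $x \in S_A$, writing $x = x' + \ell$ with $x' \in L_A^\perp$, $\ell \in L_A$. Since $\tilde{A}(\ell) = 0$, we have $A(x) = A(x')$, so $x' \in S_A \cap L_A^\perp = S_{A'} \subseteq S_{B'} \subseteq S_B$, giving $B(x') \succeq 0$. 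Since $L_A \subseteq L_B$ by hypothesis, $\tilde{B}(\ell) = 0$, so $B(x) = B(x') \succeq 0$ and hence $x \in S_B$.

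The only nontrivial step is the PSD-cone-is-pointed argument in part (2); everything else is straightforward bookkeeping with the decomposition along $L_A$ and $L_A^\perp$.
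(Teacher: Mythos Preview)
Your proof is correct and follows essentially the same strategy as the paper's: part~(1) is definitional, part~(2) runs a line through a fixed point of $S_A$ and uses pointedness of the PSD cone, and part~(3) decomposes along $L_A \oplus L_A^\perp$. The only minor difference is that in part~(3) the paper invokes an external lemma (Webster's $S = L + (S\cap L^\perp)$) and then argues $x_1 + x_2 \in L_B + S_B = S_B$, whereas you bypass this by observing directly that $A(x) = A(x')$ and $B(x) = B(x')$ once the $L_A$-component is stripped off; your route is slightly more self-contained.
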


To prove the Proposition, we need a result concerning the lineality space of 
a closed convex set.

\begin{lemma} \cite[Theorem 2.5.8]{Webster1994} \label{lem:webster}
Let $S$ be a non-empty closed convex set in $\R^n$ with lineality space $L$. 
Then $ S = L + (S \cap L^{\bot})$ and the convex set $S \cap L^{\bot}$
contains no lines.
\end{lemma}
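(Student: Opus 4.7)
The plan is to exploit the standard characterization of the lineality space $L$ of a closed convex set $S$ as the largest linear subspace satisfying $S + L = S$, equivalently as the largest linear subspace contained in the recession cone $\mathrm{rec}(S)$. Combined with the orthogonal decomposition $\R^n = L \oplus L^\bot$, the equality $S = L + (S \cap L^\bot)$ should follow almost immediately, and the absence of lines in $S \cap L^\bot$ will follow from the maximality of $L$.

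For the set equation, the inclusion $L + (S \cap L^\bot) \subseteq S$ is immediate: for any $\ell \in L$ and $s \in S \cap L^\bot \subseteq S$ one has $\ell + s \in L + S = S$. For the reverse inclusion, I take $x \in S$ and decompose it uniquely as $x = x_L + x_\perp$ with $x_L \in L$ and $x_\perp \in L^\bot$. Since $-x_L \in L$ and $S + L = S$, it follows that $x_\perp = x + (-x_L) \in S$, hence $x_\perp \in S \cap L^\bot$, and thus $x = x_L + x_\perp \in L + (S \cap L^\bot)$.

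For the absence of lines in $S \cap L^\bot$, I would argue by contradiction: suppose a line $p + \R d$ with $d \neq 0$ is contained in $S \cap L^\bot$. Taking the difference of two points of the line, one sees $d \in L^\bot$, since $L^\bot$ is a linear subspace. Moreover, since the entire line lies in the closed convex set $S$, standard recession-cone theory yields that both $d$ and $-d$ belong to $\mathrm{rec}(S)$, so the one-dimensional subspace $\R d$ lies in the recession cone of $S$. By the maximality of $L$ among linear subspaces contained in $\mathrm{rec}(S)$, this forces $\R d \subseteq L$, so $d \in L \cap L^\bot = \{0\}$, contradicting $d \neq 0$. The step that needs the most care is this recession argument: from a single line $p + \R d \subseteq S$ one must derive $\pm d \in \mathrm{rec}(S)$, i.e., $x + td \in S$ for \emph{every} $x \in S$ and every $t \geq 0$, and this uses the closedness hypothesis on $S$ in an essential way.
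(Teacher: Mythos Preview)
Your argument is correct and is essentially the standard textbook proof of this classical fact. Note, however, that the paper does not supply its own proof of this lemma: it is quoted directly from Webster's book as a known result and used as a black box in the proof of Proposition~\ref{prop:lineality}. There is therefore nothing in the paper to compare your argument against; your write-up simply fills in what the paper leaves as a citation.
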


\begin{proof} (of Proposition~\ref{prop:lineality})

\noindent To (1): This follows directly from  
$ L_A = \{ x\in\R^n\ :\ \tilde{A}(x) = 0 \} $
and the definition of linear independence.

\noindent To (2): 
If $ L_A = \{ 0 \} $, then $ L_A \subseteq L_B $ is obviously true. 
Therefore, assume $L_A \neq \{ 0 \}$.
Let $ \bar{x} \in S_A \subseteq S_B $ and $ 0 \neq x \in L_A $. 
As above, denote by $\tilde{B}(x) = B(x) - B_0 = \sum_{p=1}^n x_p B_p $ the 
pure-linear part of $B(x)$. 
Then $ A(\bar{x}+tx)\succeq 0$ for all $t\in\R$ and hence 
$ B(\bar{x}) \pm t \tilde{B}(x) = B( \bar{x}\pm tx) \succeq 0 $ for all
$t\in\R$.
Consequently, $\pm \tilde{B}(x) \succeq 0$, i.e.,\ $\tilde{B}(x) = 0$. Thus the
linear subspace $\linspan{x}$ is contained in $L_B$.
Since $0\neq x\in L_A$ was arbitrary and $L_B$ is a linear subspace, we have 
$L_A \subseteq L_B$.

\noindent To (3):
Assume first $S_A \subseteq S_B$ holds. Then 
$S_{A'} = S_A \cap L_A^{\bot} \subseteq S_B \cap L_A^{\bot} = S_{B'}$.
For the converse, note that $S_A = L_A + S_{A'}$. Let $x\in S_{A}$. Then $x =
x_1 + x_2$ with $x_1 \in L_A$ and $x_2 \in S_{A'}$.
Since $x_1 \in L_A \subseteq L_B$ and
$x_2 \in S_{A'} \subseteq S_{B'} \subseteq S_B$, we have 
$ x \in L_B + S_B = S_B $.
\end{proof}

A (linear) \emph{semidefinite program} (SDP) is an optimization problem where
one optimizes a linear objective function $c^T x$ over a spectrahedron,
$ \inf\ \{ c^Tx \, : \, A(x) \succeq 0 \} $. 
A \emph{semidefinite feasibility problem} (SDFP) is the decision problem of
deciding whether for a given linear pencil $A(x)$ the spectrahedron $S_A$ is
nonempty. 
While SDPs (with rational input data) can be approximated in polynomial time
(see \cite{de-klerk-book}), the complexity of SDFP is open. The best known
results are contained in~\cite{ramana1997b}.
In practice, however, SDFPs can be solved efficiently by semidefinite
programming.

\subsection{Polynomial matrix inequalities}
\label{sec:pmi}

Problems involving a polynomial objective function and positive semidefinite
constraints on matrix polynomials are called \emph{polynomial matrix
inequality} (PMI) problems and can be written in the following standard form. 

\begin{align}
\begin{split} \label{opt:PMI}
  \inf\ &\ f(x) \\
  \text{s.t.}\ &\ G(x) \succeq 0,
\end{split} 
\end{align}
where $f(x) \in \R[x]$ and $G(x) \in \sym_k[x]$, not necessarily linear, 
for $ x = (x_1,\ldots,x_n)$. 

Hol and Scherer~\cite{Hol}, and Kojima~\cite{Kojima2003} introduced sums of
squares relaxations for PMIs, leading to semidefinite programming relaxations
of the original problem. Here we focus mainly on the dual viewpoint of moment
relaxations, as exhibited by Henrion and Lasserre~\cite{Henrion2006}. As in
Lasserre's moment method for polynomial optimization~\cite{Lasserre2001}, the
basic idea is to linearize all polynomials by introducing a new variable for
each monomial. The relations among the monomials give semidefinite conditions
on the moment matrices.

As discussed in~\cite{Henrion2006}, directly linearizing the positive
semidefiniteness condition~\eqref{opt:PMI} can lead to relaxations that use a
relatively small number of variables. To formalize this, let $[x]$ be the
monomial basis of $\R[x]$ and let $ y = \{ y_{\alpha} \}_{ \alpha \in \N^n } $
be a real-valued sequence indexed in the basis $[x]$. A polynomial $p(x) \in
\R[x]$ can be identified by its vector of coefficients $\vec{p}$ in the basis
$[x]$. By $[x]_t$ we denote the truncated basis containing only monomials of
degree at most $t$. For the linearization operation, consider the operator
$L_{y}$ defined by the linear mapping 
$ p \mapsto L_{y}(p) = \langle \vec{p}, y \rangle $. 

Let $M( y )$ be the moment matrix defined by 
$
  [ M ( y ) ]_{\alpha, \beta} = L_{y}( [[x] [x]^T ]_{\alpha, \beta}) 
  = y_{\alpha + \beta}
$.
$M_t(y)$ denotes the truncated moment matrix that contains only entries 
$[M(y)]_{\alpha,\beta}$ with $ |\alpha|, |\beta| \leq t $.

The positive semidefiniteness constraint on a matrix polynomial
$G(x)\in\sym_k[x]$ can be modelled by so called \emph{localizing matrices}
(which for $1 \times 1$-matrices specialize to the usual localizing matrices
within Lasserre's relaxation for polynomial optimization \cite{Lasserre2001}). 
The truncated localizing matrix $M_t(Gy)$ is the block matrix obtained by 
\[ 
  [M_t(Gy)]_{\alpha,\beta}=L_{y}([[x]_t [x]_t^T\otimes G(x)]_{\alpha,\beta}) .
\]
We write $ M_t ( G y ) = L_{y} ( [x]_t [x]_t^T \otimes G(x) ) $ for short. Let
$d_G$ be the highest degree of a polynomial appearing in $G(x)$. With this
notation only linearization variables coming from monomials of degree at most
$2t+d_G$ appear in $M_t(Gy)$.

We arrive at the following hierarchy of semidefinite relaxations for the
polynomial optimization problem \eqref{opt:PMI}, 
\begin{align}
\begin{split} \label{opt:PMI-hierarchy}
  f_{\mom}^{(t)}\ =\ \inf\ &\ L_{y} ( f(x) ) \\
  \text{s.t.}\ &\ M_t(y) \succeq 0 \\
  \ &\ M_{t- \lceil d_G / 2 \rceil}(Gy) \succeq 0.
\end{split}
\end{align}
We only use the monomial basis of degree up to $ t-\lceil d_G / 2 \rceil $ in
the last constraint so that only moments coming from variables of degree $2t$
or lower appear in the whole optimization problem. Note that 
$ t = \lceil \max \{ d_G , d_f \} / 2 \rceil $ is the smallest possible
relaxation order, since for smaller $t$ there are unconstrained variables in
the objective or the truncated localizing matrix is undefined. We call 
$t = \lceil\max\{d_G,d_f\} / 2\rceil$ the \emph{initial relaxation order}.

The optimal value of the hierarchy \eqref{opt:PMI-hierarchy} converges under
mild assumptions to the optimal value of the original problem \eqref{opt:PMI}.
To make this statement precise, we call a matrix polynomial $S(x)\in\sym_k[x]$
a \emph{sum of squares} (or \emph{sos-matrix}) if it has a decomposition 
$S(x) = U(x) U(x)^T$ with $ U(x)\in\R^{k\times m}[x] $ for some positive
integer $m$. For $ k=1 $, $S(x)$ is called \emph{sos-polynomial}.

\begin{prop}{ \cite[Theorem 2.2]{Henrion2006}, see also \cite[Theorem 1]{Hol}.}
  \label{prop:convergence_las}
Let $ G(x) \in\sym_k[x] $. Assume there exists a polynomial 
$ p(x) = s(x) + \langle S(x),G(x)\rangle $ for some sos-polynomial $s(x) \in
\R[x]$ and some sos-matrix $S(x)\in \sym_k[x]$, such that the level set 
$ \{ x\in\mathbb{R}^n \, :\, p(x) \geq 0 \} $ is compact. Then 
$f_{\mom}^{(t)}\uparrow f^*$ as $t \rightarrow \infty$ in the semidefinite
hierarchy \eqref{opt:PMI-hierarchy}.
\end{prop}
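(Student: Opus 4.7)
The plan is to establish convergence of the moment hierarchy by dualizing to a sums-of-squares hierarchy and applying a matrix Putinar-type Positivstellensatz, with the archimedean property furnished by the compactness of the level set of $p(x)$. To begin, observe that $f_{\mom}^{(t)} \leq f^*$ for every admissible $t$: any feasible $\bar x$ for~\eqref{opt:PMI} gives a Dirac moment sequence $y_\alpha = \bar x^{\alpha}$ feasible for~\eqref{opt:PMI-hierarchy} with objective value $f(\bar x)$. Monotonicity $f_{\mom}^{(t)} \leq f_{\mom}^{(t+1)}$ follows by restricting moment sequences. Hence $(f_{\mom}^{(t)})$ is nondecreasing and bounded above by $f^*$; it remains only to rule out a strict gap.

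The dual of~\eqref{opt:PMI-hierarchy} is the sos-relaxation
\[
  f_{\sos}^{(t)} \;=\; \sup\bigl\{\lambda \;:\; f(x)-\lambda = \sigma(x) + \langle \Sigma(x), G(x)\rangle\bigr\},
\]
where $\sigma$ is an sos-polynomial, $\Sigma$ an sos-matrix, and the degrees are chosen so that the resulting moment and localizing matrices fit inside level $t$. Pairing any sos-certificate against a feasible moment sequence $y$ and using $M_t(y)\succeq 0$ together with $M_{t-\lceil d_G/2\rceil}(Gy)\succeq 0$ yields the weak-duality bound $f_{\sos}^{(t)} \leq f_{\mom}^{(t)}$.

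Next, the hypothesis that $\{x : p(x)\geq 0\}$ is compact, combined with the representation $p(x) = s(x) + \langle S(x), G(x)\rangle$, supplies the archimedean property of the quadratic module generated by $G(x)$: for some large $N$, the polynomial $N - \|x\|^2$ lies in this module, obtained by first applying scalar Putinar on $\{p(x)\geq 0\}$ to represent $N - \|x\|^2$ in the scalar quadratic module generated by $p$, and then substituting the matrix certificate for $p$. Now, for any $\epsilon>0$, the polynomial $f(x) - f^* + \epsilon$ is strictly positive on the PMI feasible set $\{x : G(x)\succeq 0\}$, and the matrix Putinar-type Positivstellensatz of Hol--Scherer~\cite{Hol} and Kojima~\cite{Kojima2003} furnishes a certificate
\[
  f(x) - f^* + \epsilon \;=\; \sigma_{\epsilon}(x) + \langle \Sigma_{\epsilon}(x),\, G(x)\rangle
\]
with $\sigma_{\epsilon}$ sos-polynomial and $\Sigma_{\epsilon}$ sos-matrix. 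For $t$ large enough to absorb the degrees appearing in this identity, $\lambda = f^* - \epsilon$ becomes sos-feasible, so $f_{\sos}^{(t)} \geq f^* - \epsilon$, and weak duality promotes this to $f_{\mom}^{(t)} \geq f^* - \epsilon$. Letting $\epsilon \downarrow 0$ closes the gap.

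The principal obstacle is the matrix Positivstellensatz: extending Putinar's scalar theorem to quadratic modules generated by a single PSD matrix constraint $G(x)\succeq 0$ is the genuinely nontrivial step, though it is available off the shelf in~\cite{Hol, Kojima2003}. A secondary subtlety is that the hypothesis does not assume compactness of $\{x : G(x)\succeq 0\}$ itself; it only supplies an archimedean witness for the module, which is exactly what the matrix Putinar theorem consumes.
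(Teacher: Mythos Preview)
The paper does not supply its own proof of this proposition; it is quoted from \cite[Theorem~2.2]{Henrion2006} (see also \cite[Theorem~1]{Hol}) as a preliminary result, with no argument given. There is therefore nothing in the paper to compare your sketch against.

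That said, your outline is essentially the standard proof strategy used in those references: pass to the dual sos-hierarchy, establish weak duality, use the hypothesis on $p$ to obtain the archimedean property of the matrix quadratic module generated by $G$, and then invoke the matrix-version Putinar Positivstellensatz of Hol--Scherer/Kojima to produce degree-bounded certificates for $f - f^* + \epsilon$. One point worth making explicit: when you say ``apply scalar Putinar on $\{p(x)\geq 0\}$,'' compactness of the level set alone is the Schm\"udgen hypothesis, not the Putinar one; the step works because for a \emph{single} generator $p$ the preordering and the quadratic module coincide, so Schm\"udgen already gives $N-\|x\|^2 = \sigma_0 + \sigma_1 p$ with $\sigma_0,\sigma_1$ sos, and substituting $p = s + \langle S,G\rangle$ then yields the archimedean witness in the matrix module (since $\sigma_0+\sigma_1 s$ is sos and $\sigma_1 S$ is an sos-matrix). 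You might also note that archimedeanity forces $\{G(x)\succeq 0\}$ to be compact, so $f^*$ is finite and the strict-positivity argument for $f-f^*+\epsilon$ is unproblematic when the feasible set is nonempty.
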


\section{A (sufficient) semidefinite hierarchy\label{se:hierarchy}}

Let $ A(x)\in\sym_k[x] $ and $ B(x)\in\sym_l[x]$ be two linear pencils. In
this section, we provide an optimization formulation to decide the question 
of whether the spectrahedron $S_A$ is contained in $S_B$. Using a PMI
formulation of the containment problem, we first deduce a sufficient
semidefinite hierarchy and prove the convergence of the hierarchy
(Theorem~\ref{thm:convergence}). Afterwards, we state a second, in fact highly
related, approach based on a quantified semidefinite program; see
Subsection~\ref{sec:robust}.

\subsection{An optimization approach to decide containment of spectrahedra}
\label{sec:motivation}

Clearly, $S_A$ is contained in $S_B$ if and only if $A(x)\succeq 0$ implies
the positive semidefiniteness of $B(x)$. By definition, $ B(x)\succeq 0 $ for
arbitrary but fixed $ x\in\R^n $ is equivalent to the nonnegativity of the
polynomial $ z^T B(x) z $ in the variables $z=(z_1,\ldots,z_l)$. Thus, $S_A$
is contained in $S_B$ if and only if the infimum $\mu$ of the degree 3
polynomial $z^T B(x)z$ in $(x,z)$ over the spectrahedron $S_A\times\R^l$ is
nonnegative. Imposing a normalization condition on $z$, we arrive at the
following formulation.

\begin{prop} \label{prop:containment=psdp}
Let $ A(x)\in\sym_k[x] $ and $ B(x)\in\sym_l[x]$ be linear pencils with 
$S_A\neq\emptyset$, and let $ g_r(z) = z^T z-r^2,\ g^R(z) = R^2-z^T z $ for
arbitrary but fixed $0 < r\leq R$. For the polynomial optimization problem
\begin{align}
\begin{split} \label{eq:contain_poly}
  \mu \ =\ \inf\ &\ z^T B(x) z \\
  \mathrm{s.t.}\ &\ G_A(x,z) := \diag(A(x),g_r(z),g^R(z)) \succeq 0 
\end{split}
\end{align}
the following implications are true,
\begin{align*}
  \mu > 0  \ \Rightarrow \ & S_A \subseteq \inter S_B, \\
  \mu  = 0 \ \Rightarrow \ & S_A \subseteq S_B, \\
  \mu < 0  \ \Leftrightarrow \ & \exists x \in S_A: B(x) \nsucceq 0.
\end{align*}
If the pencil $B(x)$ is reduced in the sense of
Proposition~\ref{prop:reduced_pencil}, $\mu  = 0$ implies that the
spectrahedra touch at the boundary.
\end{prop}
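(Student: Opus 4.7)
The plan is to exploit the block-diagonal structure of $G_A(x,z)$. Since $\diag(A(x), g_r(z), g^R(z)) \succeq 0$ is equivalent to $A(x) \succeq 0$ together with $r^2 \leq z^T z \leq R^2$, the feasible region of~\eqref{eq:contain_poly} decouples as $S_A \times \{z \in \R^l : r \leq \|z\| \leq R\}$, and hence
\[
  \mu \ = \ \inf\bigl\{z^T B(x) z : x \in S_A,\ r \leq \|z\| \leq R\bigr\}.
\]
All three implications will then follow from this explicit form together with the quadratic (hence homogeneous of degree two) dependence on $z$.

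I would start with the third implication. If $\mu < 0$, then some feasible $(x,z)$ with $z\neq 0$ satisfies $z^T B(x) z < 0$, witnessing $B(x)\nsucceq 0$ for that $x\in S_A$. Conversely, if $x\in S_A$ and $B(x)\nsucceq 0$, take a unit eigenvector $v$ with $v^T B(x) v < 0$; the pair $(x, rv)$ is feasible and attains the value $r^2 v^T B(x) v < 0$, so $\mu<0$. The second implication is then the contrapositive: $\mu=0$ in particular gives $\mu\geq 0$, ruling out $B(x)\nsucceq 0$ for any $x\in S_A$. For the first implication, $\mu>0$ forces $z^T B(x) z>0$ for every $x\in S_A$ and every $z$ with $\|z\|\in[r,R]$; homogeneity in $z$ extends this to $B(x)\succ 0$ for every $x\in S_A$, and since $\{x: B(x)\succ 0\}$ is open (by continuity of eigenvalues) and contained in $S_B$, it lies in $\inter S_B$, giving $S_A\subseteq \inter S_B$.

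For the final statement, assume $\mu=0$ and $B$ is reduced. By the second implication, $S_A\subseteq S_B$. Suppose the infimum is attained at some pair $(x^*,z^*)$ with $x^*\in S_A$ and $r\leq\|z^*\|\leq R$; then $(z^*)^T B(x^*) z^* = 0$ with $z^*\neq 0$, so $B(x^*)\succeq 0$ has a nontrivial kernel and is therefore not positive definite. By Proposition~\ref{prop:reduced_pencil} applied to $B(x)$, reducedness gives $\inter S_B=\{x: B(x)\succ 0\}$, so $x^*\notin\inter S_B$; combined with $x^*\in S_A\subseteq S_B$, this yields $x^*\in S_A\cap\partial S_B$, the desired touching point.

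The main obstacle is the attainment step. The $z$-variable already lies in a compact spherical shell, so attainment hinges on the $x$-coordinate: when $S_A$ is compact, continuity does the job. When $S_A$ is unbounded, one has to rule out the possibility that the infimum is only approached as $\|x\|\to\infty$. I expect this to be handled either by passing to a suitable compact truncation of $S_A$ without changing $\mu$, or by a recession-cone argument: for a minimizing sequence $(x_n,z_n)$ with $\|x_n\|\to\infty$ and $z_n\to z^*$, one analyzes $B(x_n)=B_0+\|x_n\|\tilde B(x_n/\|x_n\|)$ along a limit direction $u$ in the recession cone of $S_A$, uses $\tilde B(u)\succeq 0$ (which follows since recession directions of $S_A$ are recession directions of $S_B$), and derives a contradiction with $\mu=0$ in the case $B(x)\succ 0$ throughout~$S_A$.
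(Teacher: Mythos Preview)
Your treatment of the three displayed implications is correct and essentially matches the paper: both arguments identify the feasible set as $S_A\times\{z:r\le\|z\|\le R\}$, rescale $z$ to establish the equivalence in the case $\mu<0$, and observe that $\mu\ge 0$ forces $S_A\subseteq S_B$. For $\mu>0$ the paper argues by contrapositive (any point of $\partial S_B$ yields a feasible pair with objective value~$0$), while you argue directly that $B(x)\succ 0$ on $S_A$; these are two phrasings of the same idea.

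The final statement on the reduced case is not actually proved in the paper. Your attainment-based argument is correct whenever the infimum is attained---in particular when $S_A$ is bounded, the situation on which the rest of the paper concentrates---and you are right to flag attainment in the unbounded case as the obstacle. However, the recession-cone repair you sketch cannot be carried through: the conclusion is in fact false without boundedness of $S_A$. Take $n=2$,
\[
A(x)=\begin{bmatrix}x_1&1\\1&x_2\end{bmatrix},
\qquad
B(x)=\diag(x_1,x_2),
\]
so that $S_A=\{(x_1,x_2):x_1,x_2>0,\ x_1x_2\ge 1\}$, while $S_B$ is the closed first quadrant and $B$ is reduced. Then $S_A\subset\inter S_B$, hence $S_A\cap\partial S_B=\emptyset$, yet along $x=(t,1/t)\in S_A$ with $z=(0,r)$ one has $z^TB(x)z=r^2/t\to 0$, so $\mu=0$. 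Here the limit direction is $u=(1,0)\in\mathrm{rec}(S_A)$, $\tilde B(u)=\diag(1,0)\succeq 0$, and $(z^*)^T\tilde B(u)z^*=0$, which is precisely why your proposed contradiction does not materialize. Thus the touching conclusion genuinely needs a compactness hypothesis, and the gap you identified is not removable in general.
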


A natural choice of the parameters $r$ and $R$ is to set both to $1$. In this
case, the optimal value of the optimization problem equals the smallest
eigenvalue of any matrix in the set $\{B(x) : x \in S_A\}$. Other choices
result in an optimal value that is scaled by $R^2$ in the case $\mu < 0$ and
by $r^2$ in the case $\mu > 0$. As our numerical computations in
Section~\ref{sec:application} show, the problem, or, more precisely, its
relaxation defined in Section~\ref{sec:derivation} is numerically
ill-conditioned if we chose $r = R$ and becomes more tractable for $r < R$.

In applications, it is advisable to use reduced pencils. The reduced pencil can
be computed by the methods in~\cite{Ramana1995} and makes the numerical
computations described below better conditioned. Not only do we expect a
strictly positive objective value whenever $S_A \subseteq\inter S_B$, the
reduced pencil is also of smaller size.

\begin{proof}[Proof (of Proposition~\ref{prop:containment=psdp})]
Denote by 
$ \T = \T_{r,R}(0) = \{ z\in\R^l\ :\ r^2 \leq z^T z \leq R^2 \} $ 
the annulus defined by the constraints $ g_r(z) \geq 0,\ g^R(z) \geq 0 $. 

We first observe that the existence of an $x \in S_A$ and $z \in \R^l $ with 
$ z^T B(x) z < 0 $ implies the existence of a point 
$ z':= R\cdot\frac{z}{\|z\|} \in \T $ with $ \| z' \| = R $ and 
$ z'^T B(x) z' < 0 $, and thus $(x,z')$ lies in the product of the
spectrahedron $S_A$ and the annulus $\T$.

If $\mu \ge 0$, then clearly $S_A \subseteq S_B$. To deduce the case $\mu > 0$,
observe that the boundary $\partial S_B$ of $S_B$ is contained in the set
\[
  \{x \in \R^n \, : \, B(x) \succeq 0,\ 
  z^T B(x) z = 0 \text{ for some } z \in \T \} \, .
\]
Hence, if the boundaries of $S_A$ and $S_B$ contain a
common point $\bar{x}$, then there exists some $\bar{z}$ such that the
objective value of $(\bar{x},\bar{z})$ is zero.
\end{proof}

\subsection{Derivation of the hierarchy using moment relaxation methods}
\label{sec:derivation}
 
Using the framework of moment relaxations for PMIs introduced in
Section~\ref{sec:pmi}, we consider the following semidefinite hierarchy as a
relaxation to problem~\eqref{eq:contain_poly}, providing a semidefinite
hierarchy for the containment question. Let $y$ be a real-valued sequence
indexed by $[x,z]$, the monomial basis of 
$\R[x,z] = \R[x_1,\ldots,x_n,z_1,\ldots,z_l]$. For $t\geq 2$, we obtain the
$t$-th relaxation of the polynomial optimization
problem~\eqref{eq:contain_poly}
\begin{align}
\begin{split} \label{eq:contain_lasserre}
  \mu_{\mom}(t)\ =\ \inf\ &\ L_{y} ( z^T B(x) z ) \\
  \text{s.t.}\ &\ M_t(y) \succeq 0 \\
  \ &\ M_{t-1}(G_Ay) \succeq 0 .
\end{split}
\end{align} 
As described in Subsection~\ref{sec:pmi}, we only use the monomial basis of 
degree up to $t-1$ in the last constraint so that only moments coming from
variables of degree $2t$ or lower appear in the whole optimization problem.
Note that $t=2$ is the initial relaxation order, as defined in
Section~\ref{sec:pmi}. By increasing $t$, additional constraints are added,
which implies the following corollary.

\begin{cor} \label{co:inclusion1}
The sequence $\mu_{\mom}(t)$ for $t \ge 2$ is monotone non-decreasing.
If for some $t^*$ the condition $\mu_{\mom}(t^*) \ge 0$ is satisfied, 
then $S_A \subseteq S_B$.
\end{cor}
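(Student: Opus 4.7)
The plan is to establish both claims by the standard lower-bound and nesting arguments for moment relaxations, together with Proposition~\ref{prop:containment=psdp}.

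First I would prove monotonicity. Given a feasible moment sequence $y$ for the relaxation at level $t+1$ (so indexed by $[x,z]$ of degree up to $2(t+1)$), I would simply restrict to the subsequence $y'$ of moments of degree at most $2t$. The truncated moment matrix $M_t(y')$ is a principal submatrix of $M_{t+1}(y)$ and the localizing matrix $M_{t-1}(G_A y')$ is a principal submatrix of $M_t(G_A y)$, so both remain positive semidefinite. Since the objective $L_y(z^T B(x) z)$ depends only on moments of degree at most $3$, its value is unchanged by the restriction. Hence every feasible point at level $t+1$ gives a feasible point at level $t$ with the same objective, yielding $\mu_{\mom}(t) \leq \mu_{\mom}(t+1)$.

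Next I would argue that $\mu_{\mom}(t) \leq \mu$ for every $t \geq 2$, where $\mu$ is the optimal value of the polynomial optimization problem~\eqref{eq:contain_poly}. For any $(\bar x, \bar z)$ feasible for \eqref{eq:contain_poly} (i.e., with $G_A(\bar x, \bar z) \succeq 0$), define the \emph{atomic} moment sequence by $y_\alpha = (\bar x, \bar z)^\alpha$. Writing $v$ for the column vector of monomials in $(x,z)$ of degree at most $t$ evaluated at $(\bar x, \bar z)$, one has $M_t(y) = v v^T \succeq 0$ and $M_{t-1}(G_A y) = (v' v'^T) \otimes G_A(\bar x, \bar z) \succeq 0$ with $v'$ the analogous vector of degree at most $t-1$; the objective linearization equals $\bar z^T B(\bar x)\bar z$. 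Taking the infimum over feasible $(\bar x, \bar z)$ gives $\mu_{\mom}(t) \leq \mu$.

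Combining the two, if $\mu_{\mom}(t^*) \geq 0$ for some $t^*$, then $\mu \geq \mu_{\mom}(t^*) \geq 0$, and Proposition~\ref{prop:containment=psdp} immediately yields $S_A \subseteq S_B$. There is no genuine obstacle in this argument; the only point requiring mild care is the bookkeeping of which moments are present at each relaxation order to justify both the restriction map in the monotonicity step and the positive semidefiniteness of the localizing block in the lower-bound step.
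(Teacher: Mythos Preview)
Your proposal is correct and follows exactly the approach the paper has in mind: the paper does not give a formal proof of this corollary but simply notes, in the sentence preceding it, that ``by increasing $t$, additional constraints are added,'' which is precisely your restriction/nesting argument for monotonicity, while the second claim is the standard fact that each moment relaxation is a lower bound for~\eqref{eq:contain_poly} combined with Proposition~\ref{prop:containment=psdp}. Your write-up just makes these implicit steps explicit.
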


That is, for any $t$, the condition $ \mu_{\mom}(t) \ge 0 $ provides a
sufficient criterion for the containment $S_A \subseteq S_B$. In the case when
the inner spectrahedron $S_A$ is bounded, the sequence of relaxations is not
only monotone non-decreasing, but also converges to the optimal value of the
original polynomial optimization problem~\eqref{eq:contain_poly}, as the next
theorem shows.

\begin{thm} \label{thm:convergence}
Let $A(x)\in\sym_k[x]$ be a linear pencil such that the spectrahedron $S_A$ is
bounded. Then the optimal value of the moment
relaxation~\eqref{eq:contain_lasserre} converges from below to the optimal
value of the polynomial optimization problem~\eqref{eq:contain_poly}, i.e.,
$ \mu_{\mom}(t) \uparrow \mu$ as $t\rightarrow\infty $.
\end{thm}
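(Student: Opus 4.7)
The plan is to apply Proposition~\ref{prop:convergence_las} to the PMI \eqref{eq:contain_poly}, treating $(x,z)$ as the joint variable with objective $f(x,z)=z^T B(x) z$ and constraint matrix $G_A(x,z) \succeq 0$. Corollary~\ref{co:inclusion1} already gives monotonicity and $\mu_{\mom}(t)\leq \mu$, so it suffices to construct a polynomial of the form required by Proposition~\ref{prop:convergence_las} with compact superlevel set; convergence from below then follows directly.

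To produce such a polynomial, I exploit the block structure $G_A(x,z) = \diag(A(x), g_r(z), g^R(z))$. Choosing an sos-matrix of the form $S(x,z)=\diag(\Sigma(x), 0, 1)$ gives
\[
\langle S(x,z),G_A(x,z)\rangle = \langle \Sigma(x),A(x)\rangle + R^2 - \|z\|^2,
\]
so the $g^R$-block already controls $z$ for free. The remaining task is to encode boundedness of $x$ in terms of an sos-matrix weighted combination of $A(x)$.

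Since $S_A$ is bounded by hypothesis, there exists $N>0$ with $S_A \subseteq \B_N(0)$, so the scalar polynomial $N^2+1-\|x\|^2$ is strictly positive on $S_A$. The key input is the matrix version of Putinar's Positivstellensatz (Hol--Scherer/Kojima): whenever the quadratic module generated by a matrix polynomial is Archimedean, every polynomial strictly positive on the corresponding feasibility set admits a representation as an sos-polynomial plus a weighted combination of the constraints with an sos-matrix weight. For a linear pencil $A(x)$ whose spectrahedron $S_A$ is compact, the quadratic module is indeed Archimedean (this is the point where compactness of $S_A$ enters), so one obtains
\[
N^2 + 1 - \|x\|^2 \;=\; s_A(x) + \langle \Sigma(x), A(x)\rangle
\]
for some sos-polynomial $s_A$ and sos-matrix $\Sigma$. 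Combining with the choice above yields
\[
p(x,z) \;=\; s_A(x) + \langle \diag(\Sigma(x),0,1), G_A(x,z)\rangle \;=\; (N^2+1-\|x\|^2)+(R^2-\|z\|^2),
\]
whose superlevel set $\{p\geq 0\}$ is a Euclidean ball in $\R^{n+l}$, hence compact.

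With this $p$ the hypothesis of Proposition~\ref{prop:convergence_las} is met and $\mu_{\mom}(t)\uparrow\mu$ follows. The main obstacle is the Archimedean representation of $N^2+1-\|x\|^2$: pointwise boundedness of $S_A$ does not automatically translate into a certificate in the quadratic module, and one must invoke (or cite) the matrix Positivstellensatz together with the fact that bounded LMI-defined sets yield Archimedean quadratic modules. Once this step is granted, the rest is bookkeeping: the $g_r,g^R$ blocks handle $z$, and the proposition delivers the convergence statement.
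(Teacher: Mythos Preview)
Your proposal is correct and follows essentially the same route as the paper: both invoke Proposition~\ref{prop:convergence_las}, use the block choice $S(x,z)=\diag(\Sigma(x),0,1)$ so that $g^R$ controls $z$, and reduce the problem to certifying $N-\|x\|^2$ in the quadratic module of $A(x)$ via the Archimedean property of bounded spectrahedra (the paper cites \cite[Corollary~2.2.6]{Klep2011} for this). The only cosmetic difference is your use of $N^2+1$ in place of $N$; note also that Corollary~\ref{co:inclusion1} does not literally state $\mu_{\mom}(t)\le\mu$, but this inequality is standard for relaxations and is in any case subsumed by the conclusion of Proposition~\ref{prop:convergence_las}.
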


\begin{proof}
By Proposition~\ref{prop:convergence_las}, it suffices to show that there
exists an sos-polynomial $ s(x,z)\in\R[x,z] $ and an sos-matrix
$ S(x,z)\in\sym_{k+2}[x,z] $ defining a polynomial 
$ p(x,z) = s(x,z) + \left\langle S(x,z), G_A(x,z) \right\rangle $ 
such that the level set $ \{ (x,z) \in \R^{n+l}\ :\ p(x,z) \geq 0 \} $
is compact. Define the quadratic module 
\[
  M_A = \left\{ t(x) + \left\langle A(x), T(x) \right\rangle\ :\ 
  t(x)\in\R[x] \text{ sos-polynomial, } T(x)\in\sym_k[x] \text{ sos-matrix} 
  \right\} .
\]
As shown in \cite[Corollary 2.2.6]{Klep2011}, the boundedness of $S_A$ is
equivalent to the fact that the quadratic module $ M_A $ is Archimedean, 
i.e., there exists a positive integer $N\in\N$ such that $ N - x^T x \in M_A $.
Thus, by the definition of the quadratic module $M_A$, there exists an
sos-polynomial $ t(x)\in\R[x] $ and an sos-matrix $T(x)\in\sym_k[x]$ such that
\[
  N - x^T x = t(x) + \left\langle T(x) , A(x) \right\rangle . 
\]
Define $s(x,z) = t(x)$ and $S(x,z) = \diag(T(x),0,1)$. 
Both have the sos-property. Indeed, if $ T(x) = U(x) U(x)^T $ is an
sos-decomposition of $T(x)$, then 
$ S(x,z) = \diag(T(x),0,1) = \diag(U(x),0,1) \diag(U(x)^T,0,1) $
is one of $S(x,z)$. We get 
\[
  p(x,z) = N - x^T x + R^2 - z^T z 
  = s(x,z) + \left\langle S(x,z) , G_A(x,z) \right\rangle . 
\]
Since this polynomial defines the ball of radius $N + R^2$ centered at the
origin, $\B_{N+R^2}(0) \subset \R^{n+l}$, the level set is compact.
\end{proof}

\begin{rem}
Computing a certificate $N$ from the proof of the theorem can again be done by
the polynomial semidefinite program~\eqref{eq:contain_poly} and its
relaxation~\eqref{eq:contain_lasserre}. We have a deeper look on this in
Section~\ref{sec:radii}. In fact, the program stated there computes the
circumradius of the spectrahedron $S_A$, if it is centrally symmetric with
respect to the origin.
\end{rem}

If the optimal value of the polynomial reformulation~\eqref{eq:contain_poly}
equals zero, it might lead to numerical issues in the
relaxation~\eqref{eq:contain_lasserre} as it requires the computation of an
exact value via semidefinite programming. 
From a geometric point of view this occurs only in somewhat degenerate cases:
the spectrahedra touch at the boundary or the determinantal variety of $B(x)$
intersects the interior of the spectrahedron $S_A$; if the pencil $B(x)$
is reduced in the sense of Proposition~\ref{prop:reduced_pencil}, 
the latter case is not possible.

\subsection{An alternative formulation}
\label{sec:robust}

A crucial point in the polynomial optimization
approach~\eqref{eq:contain_poly} is the introduction of additional variables
$ z = (z_1,\ldots,z_l) $ already in the original, unrelaxed polynomial
formulation (see Proposition~\ref{prop:containment=psdp}). An alternative
approach would be to start from the following quantified semidefinite program
without additional variables,
\begin{align}
\begin{split} \label{eq:robust}
  \mu \ =\ \sup\ &\ \lambda\\
  \mathrm{s.t.}\ &\ B(x) - \lambda I_l \succeq 0\ \forall x \in S_A \, .
\end{split}
\end{align}
By a result on robust polynomial semidefinite programming by Hol and
Scherer~\cite{Hol2006} this class of problems can be solved by an approach
based on sum-of-squares matrix polynomials, leading to a hierarchy of the form
\begin{align} \label{eq:contain_sos}
\begin{split}
  \lambda_{\sos}(t) = \sup\ &\ \lambda \\
  \mathrm{s.t.} \ &\ B(x) - \lambda I_l 
  - ( \langle S_{i,j}(x) , A(x) \rangle )_{i,j=1}^l \text{ sos-matrix} \\
  \ &\ S(x) = (S_{i,j}(x))_{i,j=1}^l \in \sym_{kl}[x] \text{ sos-matrix} .
\end{split}
\end{align}
where $S(x)$ has $l\times l$ blocks of size $k\times k$ with entries of degree
at most $2t\geq 0$. Using Theorem 1 and Corollary 1 from~\cite{Hol2006}, we
can state the subsequent convergence statement for the sos-relaxation. The
proof of this theorem is very similar to the one of
Theorem~\ref{thm:convergence}. 

\begin{thm} \label{thm:sos_convergence}
Let $ A(x)\in\sym_k[x] $ be a linear pencil such that the spectrahedron $S_A$ 
is bounded. Then the optimal value of the
sos-relaxation~\eqref{eq:contain_sos} converges from below to the optimal
value of the quantified semidefinite optimization problem~\eqref{eq:robust},
i.e., $ \lambda_{\sos}(t) \uparrow \mu $ as $ t\rightarrow\infty $.
\end{thm}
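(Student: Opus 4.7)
The plan is to mirror the proof of Theorem~\ref{thm:convergence}, replacing Proposition~\ref{prop:convergence_las} by the matrix-valued sum-of-squares Positivstellensatz of Hol and Scherer. Theorem~1 together with Corollary~1 of~\cite{Hol2006} guarantee that, once the matrix-weighted quadratic module associated with the pencil $A(x)$ is archimedean, every matrix polynomial $P(x) \in \sym_l[x]$ that is strictly positive definite on $S_A$ admits a representation of exactly the type appearing in the constraints of~\eqref{eq:contain_sos}, i.e.\ $P(x) = S_0(x) + ( \langle S_{i,j}(x), A(x) \rangle )_{i,j=1}^{l}$ with $S_0 \in \sym_l[x]$ and $S = (S_{i,j})_{i,j=1}^l \in \sym_{kl}[x]$ sos-matrices of some finite degree.

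First I would verify the archimedean hypothesis. This step is identical to the corresponding step in Theorem~\ref{thm:convergence}: boundedness of $S_A$ together with~\cite[Corollary 2.2.6]{Klep2011} produces $N \in \N$, an sos-polynomial $t(x)$, and an sos-matrix $T(x)$ such that $N - x^T x = t(x) + \langle T(x), A(x) \rangle$, which is precisely the required archimedean certificate for the matrix quadratic module attached to $A(x)$.

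Second, I would establish convergence from below. Compactness of $S_A$ and continuity of eigenvalues show that the supremum $\mu$ in~\eqref{eq:robust} is attained, so $B(x) - \mu I_l \succeq 0$ on $S_A$. For any $\lambda < \mu$, the shifted pencil satisfies $B(x) - \lambda I_l \succeq (\mu - \lambda) I_l$ and is therefore uniformly positive definite on $S_A$. The Hol--Scherer Positivstellensatz then supplies sos-matrix certificates $S_0$ and $S$ of some finite degree $2t_0$, so $\lambda$ is feasible for~\eqref{eq:contain_sos} at every relaxation order $t \ge t_0$, giving $\lambda_{\sos}(t) \ge \lambda$. Letting $\lambda \uparrow \mu$ yields $\liminf_{t \to \infty} \lambda_{\sos}(t) \ge \mu$. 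Conversely, substituting any $x \in S_A$ into a feasible triple of~\eqref{eq:contain_sos} forces $B(x) - \lambda I_l \succeq 0$, whence $\lambda_{\sos}(t) \le \mu$ for every $t$. Combined with monotonicity of $\lambda_{\sos}(t)$ in $t$, this gives $\lambda_{\sos}(t) \uparrow \mu$.

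The main obstacle will be to match the specific block form appearing in~\eqref{eq:contain_sos} against the general Hol--Scherer representation: one must check that the $l \times l$ arrangement of $k \times k$ sos-weight blocks $S_{i,j}$ paired against $A(x)$ via the trace form $\langle \cdot , \cdot \rangle$ is exactly the certificate produced by their Positivstellensatz, and then align the degree parameters so that the finite-degree certificate obtained for $\lambda < \mu$ is indeed admissible at relaxation order $t_0$. Once this bookkeeping is in place, the archimedean--strict-positivity bootstrap used in Theorem~\ref{thm:convergence} transfers verbatim.
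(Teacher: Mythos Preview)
Your proposal is correct and follows essentially the same approach the paper indicates: the paper does not spell out a detailed proof but states that one uses Theorem~1 and Corollary~1 of~\cite{Hol2006} and that the argument is ``very similar to the one of Theorem~\ref{thm:convergence}.'' Your write-up does precisely this---establishing the archimedean property via~\cite[Corollary 2.2.6]{Klep2011} as in Theorem~\ref{thm:convergence}, then invoking the Hol--Scherer Positivstellensatz on $B(x)-\lambda I_l$ for $\lambda<\mu$---and the bookkeeping concern you flag about matching the block structure is indeed just a matter of unwinding definitions.
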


While in the quantified semidefinite program no additional variables 
$ z = (z_1,\ldots,z_l) $ are needed, the number of unknowns of the relaxation
grows not only in the number of variables $n$ and the relaxation order $t$,
i.e., half the degree of the entries in $S(x)$, but also in the size of both
the outer pencil $l$ and the inner pencil $k$. 
To be more precise, using the approach of Hol and Scherer, the number of
unknowns in the SDP coming from the sos-relaxation is generically 
\[
  1 + \frac{1}{2} \binom{n+t}{t} \cdot 
  \left[ k^2 l^2 \binom{n+t}{t} + l^2 \binom{n+t}{t} +kl+l \right] - ml(l+1) ,
\]
where $m$ denotes the number of affine equation constraints arising in the
sos-formulation; see \cite[Section 5]{Hol2006}. In our main approach there are
\[ 
  \frac{1}{2} \binom{n+l+t}{t}\left[\binom{n+l+t}{t}-1\right]
\]
variables. In certain situations with small $t$ (i.e., $t \in \{0,1\}$), the
sos-approach may lead to SDPs with a simpler structure than our main approach.
We study this in detail in Section~\ref{sec:application}.

\section[Positivity and containment]{Positivity of matrix maps 
and the hierarchy for containment\label{se:positivity}}

In this section, we first review the containment criterion based on complete
positivity of operators that was studied in \cite{Helton2010, Kellner2012}. We
then prove that the sufficient criteria coming from our hierarchy of
relaxations are at least as strong as the complete positivity criterion by
showing that feasibility of the complete positivity criterion implies 
$\mu \geq 0$ in the initial relaxation step of the semidefinite
hierarchy~\eqref{eq:contain_lasserre}. From this relation, we get that in some
cases already the initial relaxation step gives an exact answer to the
containment problem; see Corollaries~\ref{co:exactness} and~\ref{co:scaling}.

For the convenience of the reader, we first collect the relevant connections
between the containment problem and (complete) positivity of maps between
matrix spaces; see
Statements~\ref{prop:comp_pos_operator}--\ref{lem:containment=pos_maps}.
Theorem~\ref{thm:implications1} gives our main result concerning the
containment criterion from~\cite{Helton2010, Kellner2012} and the semidefinite
hierarchy.

\subsection{(Completely) positive maps} 
\label{sec:pos_maps}

Besides providing a (numerical) answer to the containment question, the
semidefinite hierarchy~\eqref{eq:contain_lasserre} is useful to detect
positivity of linear maps between (subspaces of) matrix spaces.

The concepts discussed in this subsection can be defined in a much more
general setting, using the language of operator theory. See,
e.g.,~\cite{Paulsen2003} for an introduction to positive and completely
positive maps on \ca s.

\begin{definition}
Given two linear subspaces $\cal{A}\subseteq\R^{k \times k}$ and
$\cal{B}\subseteq\R^{l \times l}$, a linear map $\Phi:\
\cal{A}\rightarrow\cal{B}$ is
called \emph{positive} if every positive semidefinite matrix in $\cal{A}$ is
mapped to a positive semidefinite matrix in $\cal{B}$, i.e.,
$ \Phi(\cal{A}\cap\psd_k) \subseteq \cal{B}\cap\psd_l $.

The map $\Phi$ is called \emph{$d$-positive} if the map 
$ 
  \Phi_d:\ \R^{d\times d}\otimes\cal{A} \rightarrow
  \R^{d\times d}\otimes\cal{B},\ M\otimes A\mapsto M\otimes\Phi(A)
$ 
is positive, i.e.\ 
$ (\Phi(A_{ij}))_{i,j=1}^d \in \cal{B}^{d\times d}\cap\psd_{dl} $ for
$ (A_{ij})_{i,j=1}^d \in\cal{A}^{d\times d}\cap\psd_{dk} $. 

Finally, $\Phi$ is called \emph{completely positive} if $\Phi_d$ is positive
for all positive integers $d$. 
\end{definition}

Naturally, every $d$-positive map is $e$-positive for all positive integers
$e\leq d$. Provided that $\mathcal{A}$ contains a positive definite matrix,
complete positivity of $\Phi$ is equivalent to $k$-positivity; 
see~\cite[Theorem 6.1]{Paulsen2003}. Interestingly, in this situation every
completely positive map does have a completely positive extension to the full
matrix space and can therefore be represented by a positive semidefinite
matrix. This is well known in the general setting of \ca s and persists in our
real setting.

\begin{prop}[{\cite[Theorem 6.2.]{Paulsen2003}}]\label{prop:comp_pos_operator}
Let $ \cal{A} \subseteq \R^{k \times k}$ be a linear subspace containing a
positive definite matrix, then each completely positive map 
$\Phi:\ \cal{A}\rightarrow\R^{l\times l} $ has an extension to a completely
positive map $ \tilde\Phi:\ \R^{k \times k} \rightarrow \R^{l\times l} $. 

Moreover, complete positivity of the map $\tilde\Phi$ is equivalent to
positive semidefiniteness of the matrix 
$ C = (C_{ij})_{i,j=1}^k 
  = \sum_{i,j=1}^k (E_{ij}\otimes\tilde\Phi(E_{ij}))\in\sym_{kl} $,
where $E_{ij}$ denotes the $k\times k$-matrix with 1 in position $(i,j)$ and
zeros elsewhere.
\end{prop}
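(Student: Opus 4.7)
The plan is to split the statement into its two claims and handle them by different techniques: existence of the extension $\tilde\Phi$ follows from an Arveson-type Hahn--Banach argument, while the characterization by positive semidefiniteness of the Choi matrix $C$ is essentially a Kraus-decomposition argument.

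For the extension, I would first reduce to the case in which $I_k \in \mathcal{A}$: since $\mathcal{A}$ contains some positive definite $A_0$, replacing each $A \in \mathcal{A}$ by $A_0^{-1/2} A A_0^{-1/2}$ (and absorbing the similarity into $\Phi$) makes $\mathcal{A}$ an operator system containing the identity. Then I would translate the map $\Phi$ into a linear functional $f_\Phi$ on the tensor space $\mathcal{A} \otimes \R^{l \times l}$ via $f_\Phi(A \otimes M) = \tr(\Phi(A) M)$, and observe that complete positivity of $\Phi$ is equivalent to $f_\Phi$ being nonnegative on the cone $(\mathcal{A} \otimes \R^{l \times l}) \cap \psd_{kl}$. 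Since $I_k \otimes I_l$ lies in this cone and serves as an order unit for both the restricted cone and the ambient cone $\psd_{kl}$, Hahn--Banach (in the form of extension of positive functionals dominated by an order unit) produces a positive extension $\tilde f$ on $\R^{k\times k} \otimes \R^{l\times l}$. The inverse of the isomorphism $\Phi \leftrightarrow f_\Phi$ turns $\tilde f$ into the desired completely positive extension $\tilde\Phi \colon \R^{k \times k} \to \R^{l \times l}$.

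For the Choi-matrix characterization, observe that the block matrix $E = (E_{ij})_{i,j=1}^k \in \sym_{k^2}$ is positive semidefinite of rank one, since $E = v v^T$ for the vector $v = \sum_{i=1}^k e_i \otimes e_i \in \R^{k^2}$. If $\tilde\Phi$ is completely positive, then in particular it is $k$-positive, and applying it blockwise to $E$ yields $C = (\tilde\Phi(E_{ij}))_{i,j=1}^k \succeq 0$. Conversely, given $C \succeq 0$, take a decomposition $C = \sum_{r=1}^m w_r w_r^T$ with $w_r \in \R^{kl}$ (for instance via the spectral decomposition), and reshape each $w_r$ into a matrix $V_r \in \R^{l \times k}$ whose $s$-th column is the $s$-th length-$l$ block of $w_r$. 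A direct entrywise check shows $\tilde\Phi(E_{ij}) = \sum_r V_r E_{ij} V_r^T$ for every $i,j$, and by linearity $\tilde\Phi(A) = \sum_r V_r A V_r^T$ for all $A \in \R^{k \times k}$. This Kraus-type representation is manifestly completely positive, since for any $d$ and any $(A_{ij})_{i,j=1}^d \in \psd_{dk}$ one has $(\tilde\Phi(A_{ij}))_{i,j=1}^d = \sum_r (I_d \otimes V_r)(A_{ij})(I_d \otimes V_r)^T \succeq 0$.

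The main obstacle is the extension part: one must verify carefully that $I_k \otimes I_l$ is a genuine order unit so that Hahn--Banach preserves positivity, which is exactly where the hypothesis that $\mathcal{A}$ contains a positive definite matrix enters. The Choi characterization, by contrast, is a clean reshaping argument built on the observation that $(E_{ij})_{i,j=1}^k$ is the canonical rank-one positive semidefinite block matrix; nothing more than linear algebra over $\R$ is needed.
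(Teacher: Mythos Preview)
The paper does not give its own proof of this proposition; it is quoted directly from Paulsen's book (Theorem~6.2 there) and used as a black box. So there is nothing in the paper to compare your argument against.

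That said, your outline is essentially the standard proof one finds in Paulsen. The extension part is Arveson's extension theorem in the finite-dimensional real setting: the similarity $A\mapsto A_0^{-1/2}AA_0^{-1/2}$ legitimately reduces to an operator system containing $I_k$, and the passage to a positive functional on $\mathcal{A}\otimes\R^{l\times l}$ followed by a Krein/Hahn--Banach extension using $I_k\otimes I_l$ as an order unit is exactly Paulsen's Lemma~6.1/Theorem~6.2 argument. One point you gloss over is the \emph{converse} direction of the correspondence ``$\Phi$ completely positive $\Leftrightarrow$ $f_\Phi$ nonnegative on the cone'': nonnegativity of $f_\Phi$ a priori only tests $(\Phi(A_{ij}))_{i,j}$ against a single vector, and one needs the additional observation that the cone $(\mathcal{A}\otimes\R^{l\times l})\cap\psd_{kl}$ is invariant under conjugation by $I_k\otimes W$ for arbitrary $W\in\R^{l\times l}$, which then yields nonnegativity against all test vectors. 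This is routine, but worth making explicit. The Choi-matrix half of your argument (rank-one block matrix $E=vv^T$, Kraus decomposition from a factorization of $C$) is the standard proof of Choi's theorem and is correct as written.
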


A significant implication of Proposition~\ref{prop:comp_pos_operator} is the
following. 
Given a linear subspace $\mathcal{A}$ containing a positive definite matrix,
a linear map $\Phi:\ \mathcal{A}\rightarrow\R^{l\times l}$ is completely
positive if and only if at least one of all possible extensions of $\Phi$ to
the whole matrix space is completely positive. 
The set of extensions is determined by linear equations, fixing some (but not
all) of the entries in the matrix $C$. Testing the partially indeterminate 
matrix $C$ for a positive semidefinite extension is a semidefinite feasibility
problem (SDFP). Recall from the Preliminaries~\ref{sec:spec} that while the
computational complexity of solving SDFPs is open, in practice it can be done
efficiently by semidefinite programming. In~Section~\ref{sec:contain1} we
apply this to containment of spectrahedra.

Surprisingly, positive maps on subspaces do not always have a positive
extension to the full space; see, e.g.,~\cite[Example 3.16]{Stormer1986}. And
even if they do, characterizations of positive maps exist merely in low
dimensions and in the setting of hermitian matrix
algebras~\cite{Woronowicz1976b,Woronowicz1976}. The structure of positive maps
on higher dimensional spaces is not completely
understood~\cite{Skowronek2009,Stormer1963}. 

As we will see in this section, checking positivity of a map on subspaces is
equivalent to checking containment for spectrahedra. We can thus apply our
hierarchy for the containment question. 

\subsection{Equivalence of positive maps and containment}
\label{sec:pos-contain}
Given the linear pencils $A(x)\in\sym_k[x]$ and $B(x)\in\sym_l[x]$, we call
the linear pencil 
\begin{equation} \label{eq:extended_pencil}
  \widea = 1\oplus A(x) = 1\oplus A_0 + \sum_{p=1}^n x_p (0\oplus A_p) 
\end{equation}
the \emph{extended linear pencil} of $A(x)$, where $\oplus$ denotes the direct
sum of matrices. Define the corresponding linear subspaces
\begin{align*}
  \cal{A} &= \linspan(A_0, A_1, \ldots, A_n) \subseteq \sym_k, \\
  \widehat{\cal{A}} 
  &= \linspan(1\oplus A_0, 0\oplus A_1, \ldots, 0\oplus A_n) 
  \subseteq \sym_{k+1}, \text{ and}\\
  \cal{B}& = \linspan(B_0, B_1, \ldots, B_n) \subseteq \sym_l. 
\end{align*}
For linearly independent $A_1,\ldots,A_n$, let  
$ \widephi_{AB}: \widehat{\cal{A}} \rightarrow \cal{B} $ 
be the linear map defined by
\[
  \widephi_{AB} (1 \oplus A_0) = B_0 
  \quad \text{and} \quad \forall p \in \{1,\ldots ,n\}: \
  \widephi_{AB} (0 \oplus A_p) = B_p.
\]
Note that since every linear combination 
$ 0 = \lambda_0 (1\oplus A_0) + \sum_{p=1}^n \lambda_p (0\oplus A_p) $
for real $\lambda_0,\ldots,\lambda_n$ yields $\lambda_0 = 0$, it suffices to
assume the linear independence of the coefficient matrices $A_1,\ldots,A_n$
to ensure that $\widephi_{AB}$ is well-defined. To obtain linear independence,
the lineality space can be treated separately, as described in the
Preliminaries~\ref{sec:spec}. Note that the lineality space for the extended
pencil is the same as for the actual pencil.

If additionally, $A_0, A_1, \ldots, A_n$ are linearly independent, we can
retreat to the simpler map $ \Phi_{AB}: \cal{A}\rightarrow\cal{B}$ defined by 
\begin{align*}
  &\forall p \in \{0,\ldots ,n\}: \ \Phi_{AB}: A_p  \mapsto B_p.
\end{align*}

\begin{assumption}
Let $A_0, \ldots, A_n$ be linearly independent for statements concerning
$\Phi_{AB}$ and let $A_1, \ldots ,A_n$ be linearly independent for statements
concerning $\widehat\Phi_{AB}$. 
\end{assumption}

In \cite[Theorem 3.5]{Helton2010} the authors state the relationship between
$d$-positive maps and the question of containment of (bounded) matricial
positivity domains which for $d=1$ contains the case of spectrahedra. The
proof there is based on operator algebra. We give a more streamlined proof
concerning positive maps and spectrahedra.

\begin{prop} \label{prop:containment=maps}
Let $A(x)\in\sym_k[x]$  and $B(x)\in\sym_l[x]$ be linear pencils.
\begin{enumerate}
  \item 
  If $\Phi_{AB}$ or $\widephi_{AB}$ is positive, then $S_A\subseteq S_B$.
  \item 
  If $ S_A \neq\emptyset $, 
  then $ S_A \subseteq S_B $ implies $ \widephi_{AB} $ is positive.
  \item 
  If $ S_A \neq\emptyset $ and $S_A$ is bounded, 
  then $ S_A \subseteq S_B $ implies $ \Phi_{AB} $ is positive.
\end{enumerate}
\end{prop}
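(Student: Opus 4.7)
The plan is to prove each of the three items by direct case analysis on the coefficient of the ``constant'' matrix, interleaving the linearity of $\Phi_{AB}$ (resp.\ $\widehat{\Phi}_{AB}$) with elementary manipulations of the linear pencils. The proofs of (1) and (2) are essentially routine; the only genuinely delicate argument is the case $\lambda_0 < 0$ in (3), where boundedness of $S_A$ and linear independence of $A_0,\dots,A_n$ both enter.

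For (1), I would start from $x \in S_A$, observe that the pencil value $A(x) = \sum_{p=0}^n x_p A_p$ (with $x_0 := 1$) lies in $\mathcal{A} \cap \psd_k$, and use linearity to write $\Phi_{AB}(A(x)) = B(x)$. Positivity of $\Phi_{AB}$ then gives $B(x) \succeq 0$. For the $\widehat{\Phi}_{AB}$ version, note that $\widehat{A}(x) = 1 \oplus A(x) \in \widehat{\mathcal{A}} \cap \psd_{k+1}$ precisely when $A(x) \succeq 0$, and $\widehat{\Phi}_{AB}(\widehat{A}(x)) = B(x)$ by linearity.

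For (2), I would write a generic element of $\widehat{\mathcal{A}}$ as
\[
  M = \lambda_0(1 \oplus A_0) + \sum_{p=1}^n \lambda_p(0 \oplus A_p)
    = (\lambda_0) \oplus \Bigl(\lambda_0 A_0 + \textstyle\sum_{p=1}^n \lambda_p A_p\Bigr),
\]
so that $M \succeq 0$ forces $\lambda_0 \ge 0$ together with $\lambda_0 A_0 + \sum \lambda_p A_p \succeq 0$. If $\lambda_0 > 0$, set $x_p = \lambda_p/\lambda_0$; then $A(x) \succeq 0$, hence $x \in S_A \subseteq S_B$, and the image $\widehat{\Phi}_{AB}(M) = \lambda_0 B(x) \succeq 0$. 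If $\lambda_0 = 0$, pick any $\bar x \in S_A$ and notice that $A(\bar x + t\lambda) = A(\bar x) + t\sum \lambda_p A_p \succeq 0$ for all $t \ge 0$; hence $\bar x + t\lambda \in S_A \subseteq S_B$, so $B(\bar x) + t\sum \lambda_p B_p \succeq 0$ for all $t \ge 0$, and dividing by $t$ and letting $t \to \infty$ yields $\widehat{\Phi}_{AB}(M) = \sum \lambda_p B_p \succeq 0$.

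For (3), a generic element of $\mathcal{A}$ is $\sum_{p=0}^n \lambda_p A_p$, and the sign of $\lambda_0$ is no longer automatically nonnegative. The cases $\lambda_0 > 0$ and $\lambda_0 = 0$ are handled as in (2), with the simplification that when $\lambda_0 = 0$ the matrix $\sum_{p\ge1}\lambda_p A_p \succeq 0$ places $(\lambda_1,\dots,\lambda_n)$ in the recession cone of $S_A$, which is trivial by boundedness, so $\lambda = 0$ and the image is $0$. The hard part, as anticipated, is $\lambda_0 < 0$. Here I would pick $\bar x \in S_A$ and add $|\lambda_0|$ times $A(\bar x) \succeq 0$ to $\lambda_0 A_0 + \sum \lambda_p A_p \succeq 0$; the $A_0$-terms cancel and one obtains
\[
  \tilde A(\lambda - \lambda_0 \bar x) = \sum_{p=1}^n (\lambda_p - \lambda_0 \bar x_p) A_p \succeq 0,
\]
so $\lambda - \lambda_0 \bar x$ lies in the recession cone of $S_A$. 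Boundedness then forces $\lambda = \lambda_0 \bar x$, whence $\sum_{p=0}^n \lambda_p A_p = \lambda_0 A(\bar x)$. This matrix is simultaneously PSD (by hypothesis) and a negative scalar multiple of a PSD matrix, so it vanishes; linear independence of $A_0,\dots,A_n$ then forces $\lambda_0 = 0$, contradicting $\lambda_0 < 0$. Thus the case $\lambda_0 < 0$ is vacuous, and the proof of positivity of $\Phi_{AB}$ is complete. The main obstacle is precisely this last step, where both hypotheses of (3) (boundedness of $S_A$ and linear independence of $A_0,\dots,A_n$) must be combined; without either one the case $\lambda_0<0$ could occur genuinely and the argument would break down.
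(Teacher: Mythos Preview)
Your proof is correct and follows essentially the same approach as the paper's: both argue by cases on the sign of the coefficient $\lambda_0$ (the paper's $x_0$), use the recession cone of $S_A$ together with boundedness to dispose of the non-positive cases in (3), and invoke linear independence of $A_0,\dots,A_n$ to reach the contradiction when $\lambda_0<0$. The only cosmetic differences are that the paper treats $x_0\le 0$ in a single case (using the inequality $A(0,x)\succeq |x_0|A_0$ rather than directly adding $|\lambda_0|A(\bar x)$), while you separate $\lambda_0=0$ and $\lambda_0<0$; your addition argument is marginally more direct but the substance is identical.
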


\begin{proof} To (1): 
Let $\Phi_{AB}$ be positive and let $x \in S_A$. Then $A(x) \succeq 0$. By the
positivity of $\Phi_{AB}$, we have $B(x) = \Phi_{AB}(A(x))\succeq 0$, and thus
$x\in S_B$. There is no difference in the proof if $\widephi_{AB}$ is positive.

\smallskip

\noindent To (2): 
First note that $ A(x)\succeq 0 $ if and only if the extended linear pencil
$ \widea(x) $ is positive semidefinite. Hence $ S_A = S_{\widea} $. Set 
$ \widea(x_0, x) := x_0 (1\oplus A_0) + \sum_{p=1}^n x_p (0\oplus A_p) $
and let $x_0 \in \R$ with $\widea(x_0,x)\in\widehat{\cal{A}}\cap \psd_{k+1}$.
Then $x_0 \geq 0$.

\paragraph{Case $x_0 > 0$.} 
Then $ \widea(1,x/x_0) = \frac{1}{x_0} \widea(x_0,x) \succeq 0 $.
Thus, $ \frac{x}{x_0}\in S_{\widea} = S_A \subseteq S_B $ and 
$ B(x_0,x) = x_0 B(1,x/x_0) \succeq 0 $. 
We get 
$ \widephi_{AB} (\widea(x_0,x)) = B(x_0,x) \in \cal{B}\cap\psd_l $.

\paragraph{Case $x_0 = 0$.} 
By assumption, $S_A$ is nonempty, i.e., there exists a point $\bar{x}\in S_A$.
Then $ A(0,x) \succeq 0 $ together with the positive semidefiniteness of 
$A(1,\bar{x}) $  (or, equivalently, $ \widea(1,\bar{x})$) implies 
$ \bar{x} + tx\in\ S_A \subseteq S_B $ for all $t\geq 0$. Thus $ x $ is a
point of the recession cone of $S_A$ which clearly is contained in the
recession cone of $S_B$. Consequently, 
$ \frac{1}{t} B(1,\bar{x}) + B(0,x) \succeq 0 $ for all $ t > 0 $. By
closedness of the cone of positive semidefinite matrices, we get 
$ B(0,x) \succeq 0 $. Hence
$ \widephi_{AB}(\widea(x_0,x)) = \widephi_{AB}(\widea(0,x)) = B(0,x)\succeq 0$.

\smallskip

\noindent To (3): 
Let $S_A\subseteq S_B$ and $A(x_0,x)=x_0 A_0+\sum_{p=1}^n x_p A_p \in\cal{A}$
be positive semidefinite. 

\paragraph{Case $x_0 \leq 0$.} 
Since $S_A$ is nonempty, there exists $ \bar{x}\in\R^n $ such that 
$ A(1,\bar{x}) \succeq 0 $, and hence 
\[
  A(0,x+|x_0| \bar{x}) = A(0,x) + A(0,|x_0|\bar{x}) 
  \succeq |x_0| A_0 + A(0,|x_0|\bar{x}) = |x_0|\cdot A(1,\bar{x}) \succeq 0 .
\]
For $ A(0,x+|x_0| \bar{x}) \neq 0 $, one has an improving ray of the
spectrahedron $S_A$, in contradiction to the boundedness of $S_A$.
For $ A(0,x+|x_0| \bar{x}) = 0 $, the linear independence of $A_0,\ldots,A_n$
implies $ x + |x_0| \bar{x} = 0 $. But then 
$ x_0 A(1,\bar{x}) = A(x_0,x)\succeq 0$ together with $x_0 \leq 0$ and 
$A(1,\bar{x}) \succeq 0 $ implies either $A(1,\bar{x})= 0$, in contradiction
to linear independence, or $(x_0,x) = 0$. Clearly, $\Phi_{AB}(0) = 0$. 

\paragraph{Case $x_0 > 0$.} Then $x/x_0\in S_A\subseteq S_B$. Thus,
$\Phi_{AB}(A(x_0,x))=B(x_0,x)\succeq 0$.
\end{proof}

The assumptions in parts (2) and (3) of
Proposition~\ref{prop:containment=maps} can not be omitted in general, as the
next examples show.

\begin{ex} (1) Consider the two linear pencils
\begin{align*}
  A(x) = \begin{bmatrix} -3+x_1+x_2 & 0 & 0 \\ 0 & -1+x_1 & 0 \\ 0 & 0 &
    -1+x_2 \end{bmatrix} 
  \text{ and }
  B(x) = \begin{bmatrix} -1+x_1+x_2 & 0 & 0 \\ 0 & x_1 & 0 \\ 0 & 0 & x_2
    \end{bmatrix} 
\end{align*}
defining unbounded, nonempty polyhedra in $\R^2$. It is easy to see that the
coefficient matrices are linearly independent and $S_A$ does not contain the
origin. 

While $S_A$ is contained in $S_B$, the linear map $\Phi_{AB}$ is not positive.
Indeed, the homogeneous pencil $ A(x_0,x) $ evaluated at the point 
$ (x_0,x_1,x_2) = (-1,-1/2,-1/2) $ is positive definite while $ B(x_0,x) $ is
indefinite. 

Therefore, the boundedness assumption in part (3) of
Proposition~\ref{prop:containment=maps} can not be omitted in general.
Using the extended linear pencil $ \widea(x) = 1 \oplus A(x) $ instead of
$A(x)$, the resulting constraint $ x_0 \geq 0 $ yields the positivity of
$ \widephi_{AB} $.
In fact, $ \widephi_{AB} $ is completely positive, which can be checked by the
SDFP~\eqref{eq:inclusion} as introduced in the next subsection.

\smallskip
\noindent (2) Consider the two linear pencils
\begin{align*}
  A(x) = \begin{bmatrix} x & 1 \\ 1 & 0 \end{bmatrix} 
  \text{ and }
  B(x) = \begin{bmatrix} 1 & -x \\ -x & 1 \end{bmatrix} .
\end{align*}
with linearly independent coefficient matrices. The corresponding spectrahedra
are the empty set, $ S_A = \emptyset $, and the interval $ S_B = [-1,1] $.
Thus $ S_A \subseteq S_B $. However, the linear map $\Phi_{AB}$ is not
positive, since the homogeneous pencil $ A(x_0,x) $ is positive semidefinite
at $ (x_0,x) = (0,1)$ but $ B(0,1) $ is not. Note that this holds for the
extended pencil as well. Thus nonemptyness of the inner spectrahedron can not
be dropped.
\end{ex}

\begin{rem} \label{lem:containment=pos_maps}
If our setting were changed from the case of linear subspaces to the case of
affine subspaces, with a natural adaption of the notion of positivity to
affine maps, Proposition~\ref{prop:containment=maps} had a slightly easier
formulation and proof:
Let $ A(x)\in\sym_k[x] $ and $ B(x)\in\sym_l[x] $. Define the affine subspaces 
$ \bar{\cal{A}} = \frac{1}{n} A_0 + \lin(A_1, \ldots, A_n) $ 
and $ \bar{\cal{B}} = \frac{1}{n} B_0 + \lin(B_1, \ldots, B_n) $ 
for linearly independent $ A_1,\ldots, A_n $. Then $ S_A \subseteq S_B $ if
and only if the affine function 
$ \bar{\Phi}_{AB}:\ \bar{\cal{A}}\rightarrow\bar{\cal{B}} $ 
defined by $ \frac{1}{n} A_0 + A_i\mapsto \frac{1}{n} B_0 + B_i $  for
$ i = 1,\ldots,n $ is positive.

\begin{proof} 
First, let $\bar{\Phi}_{AB}$ be positive and let $x \in S_A$.
Since $\bar{\Phi}_{AB}$ is positive, we have 
$ B(x) = \bar{\Phi}_{AB}(A(x)) \succeq 0 $, thus $x \in S_B$. 
Conversely, let 
$ \frac{1}{n} A_0 + \sum_{p=1}^n x_p A_p \in\bar{\cal{A}}\cap\psd_k $.
Then $nx\in S_A\subseteq S_B$ and hence 
$ \bar{\Phi}_{AB}(\frac{1}{n} A_0 + \sum_{p=1}^n x_p A_p) 
  = \frac{1}{n} B_0 + \sum_{p=1}^n x_p B_p \succeq 0 $.
\end{proof}
\end{rem}

\subsection{Connection between complete positivity and containment criteria}
\label{sec:contain1}

Choosing a basis of $\cal{A}$, we get a representation of the operator map
$\Phi_{AB}$ and by applying Proposition~\ref{prop:containment=maps}, we can use
the hierarchy defined in the last section to test positivity of $\Phi_{AB}$. 

To keep the notation simple, we assume boundedness and nonemptyness of $S_A$,
and only work with the map $\Phi_{AB}$. All statements can be given in the
general case using the map $\widehat\Phi_{AB}$. 
As seen before (see Section~\ref{sec:pos_maps}), every extension 
$ \tilde{\Phi}_{AB} $ of the linear map $\Phi_{AB}$ to the full matrix spaces
corresponds to a matrix 
$ C = (\tilde{\Phi}_{AB}(E_{ij}))_{i,j = 1}^k \in \sym_{kl} $
perceiving $C$ as a symmetric block matrix consisting of $k \times k$ blocks
$C_{ij}$ of size $l \times l$. 
Since $ A_0,\ldots,A_n $ and $ B_0,\ldots,B_n $ are generators of $\cal{A}$ and 
$\cal{B}$, respectively, some entries of $C$ are defined via 
$ B_p = \sum_{i,j=1}^k a_{ij}^p C_{ij} $ for $ p = 0,\ldots,n $. 

By Proposition~\ref{prop:containment=maps}, the polynomial optimization problem
from Proposition~\ref{prop:containment=psdp} can be translated to the problem
\begin{align}
\begin{split}
  \inf\ &\ z^T B(x) z \\
  \text{s.t.}\ & B(x) = \sum_{i,j=1}^k (A(x))_{ij} C_{ij} \\
  & G_A(x,z) \succeq 0 .
\end{split}
\label{eq:contain_pos}
\end{align}

Moreover, (an extension of) $\Phi_{AB}$ is completely positive if and only if
the matrix $ C = (\tilde\Phi_{AB}(E_{ij}))_{i,j = 1}^k \in \sym_{kl} $ is
positive semidefinite, i.e., if and only if the SDFP
\begin{equation}
  \label{eq:inclusion}
  C = \left( C_{ij} \right)_{i,j=1}^{k} \succeq 0
  \text{ and } 
  B_{p} = \sum_{i,j=1}^{k} a^{p}_{ij} C_{ij}
  \text{ for } p = 0,\ldots,n
\end{equation}
has a solution. So checking if there exists a positive semidefinite 
$ C \in\sym_{kl} $, gives another sufficient criterion for the containment
question. This is the method described in~\cite{Helton2010,Kellner2012}.

\begin{prop}
\label{prop:completepos1}
\cite[Theorem 4.3]{Kellner2012}
If the SDFP~\eqref{eq:inclusion} has a solution $C \succeq 0$, 
then $ S_A \subseteq S_B $.
\end{prop}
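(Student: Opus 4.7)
The plan is to fix an arbitrary $x \in S_A$ and exhibit $B(x)$ as a sum of matrices of the form $(v \otimes I_l)^\top C (v \otimes I_l)$, each of which is positive semidefinite by hypothesis on $C$. Adopting the convention $x_0 = 1$, so that $A(x)_{ij} = \sum_{p=0}^n x_p a^p_{ij}$ and $B(x) = \sum_{p=0}^n x_p B_p$, the defining relations $B_p = \sum_{i,j=1}^k a^p_{ij} C_{ij}$ of the SDFP~\eqref{eq:inclusion} combined linearly give the key identity
$$ B(x) \ =\ \sum_{i,j=1}^k (A(x))_{ij}\, C_{ij}\quad \text{for every } x \in \R^n. $$

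Next I would use the block structure of $C$. Writing $C = \sum_{i,j=1}^k E_{ij} \otimes C_{ij} \in \sym_{kl}$, a direct calculation gives the Kronecker identity
$$ (v \otimes I_l)^\top C\, (v \otimes I_l) \ =\ \sum_{i,j=1}^k v_i v_j C_{ij} $$
for any $v \in \R^k$. Since $C \succeq 0$, the left-hand side is a positive semidefinite $l \times l$ matrix for every choice of $v$. This is the step that genuinely uses the positive semidefiniteness of $C$, and it is really the only nontrivial ingredient in the argument.

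Finally, since $x \in S_A$ we have $A(x) \succeq 0$ and may choose any PSD decomposition $A(x) = \sum_r v_r v_r^\top$ with $v_r \in \R^k$, so that $(A(x))_{ij} = \sum_r (v_r)_i (v_r)_j$. Substituting into the first identity yields
$$ B(x) \ =\ \sum_r \sum_{i,j=1}^k (v_r)_i (v_r)_j C_{ij} \ =\ \sum_r (v_r \otimes I_l)^\top C\, (v_r \otimes I_l) \ \succeq\ 0, $$
as a sum of PSD matrices. Hence $x \in S_B$, and since $x \in S_A$ was arbitrary, $S_A \subseteq S_B$.
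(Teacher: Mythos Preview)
Your argument is correct. The identity $B(x)=\sum_{i,j}(A(x))_{ij}C_{ij}$, the Kronecker formula $(v\otimes I_l)^\top C(v\otimes I_l)=\sum_{i,j}v_iv_jC_{ij}$, and the PSD decomposition $A(x)=\sum_r v_rv_r^\top$ combine exactly as you say, and each step is clean.

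As for comparison: the paper does not actually prove this proposition in full; it simply cites \cite[Theorem~4.3]{Kellner2012} and records the key reformulation $B(x)=\sum_{i,j}(A(x))_{ij}C_{ij}$ in the paragraph that follows. The closest thing to an in-paper argument appears in the proof of Theorem~\ref{thm:connection1}, where the authors show $z^\top B(x)z\ge 0$ by writing it as $\mathds{1}^\top\big((zz^\top\otimes A(x))\odot C'\big)\mathds{1}$ and invoking the Schur product theorem. Your route is a mild variant of the same idea: instead of appealing to the Schur product theorem, you make the rank-one decomposition of $A(x)$ explicit and conjugate $C$ by $v_r\otimes I_l$. This is arguably more self-contained (no named theorem needed), while the Schur-product phrasing has the advantage of transferring verbatim to the linearized setting of Theorem~\ref{thm:connection1}, where $A(x)$ is replaced by the moment matrix $L_y(zz^\top\otimes A(x))$ that need not factor through any concrete $x$.
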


In terms of the linear pencils, the previous proposition states that the pencil 
$ B(x) = \sum_{ij=1}^{k} (A(x))_{ij} C_{ij} $ is positive semidefinite if 
$ A(x)$ and $C$ are positive semidefinite, rendering the objective polynomial
in~\eqref{eq:contain_pos} nonnegative on the set $S_A \times \T_{r,R}(0)$.

As we will see next, positive semidefiniteness of the matrix $C$ is not only a
sufficient condition for containment and thus for the nonnegativity of the
polynomial optimization problem in Proposition~\ref{prop:containment=psdp}, but
also for its relaxations~\eqref{eq:contain_lasserre}
and~\eqref{eq:contain_sos}. 

We show the following result:

\begin{thm} \label{thm:implications1}
Let $ S_A \neq\emptyset $. Then for the properties 
\begin{enumerate}
  \item[(1')] 
    $\widephi_{AB}$ is completely positive,
  \item[(1)] 
    the SDFP~\eqref{eq:inclusion} has a solution $C \succeq 0$,
  \item[(2')]
  $ \lambda_{\sos}(0) \geq 0 $ 
  (and thus $\lambda_{\sos}(t) \ge 0$ for all $t \ge 0$),
  \item[(2)] 
  $\mu_{\mom}(2) \ge 0$ (and thus $\mu_{\mom}(t) \ge 0$ for all $t \ge 2$),
  \item[(3)] 
  $S_A \subseteq S_B$,
  \item[(3')]
  $\widephi_{AB}$ is positive,
\end{enumerate}
we have the implications and equivalences
\begin{align*}
  (1') \Longleftarrow (1) \iff (2') \Longrightarrow (2) \Longrightarrow  
  (3) \iff (3') 
\end{align*}
with the first implication an equivalence whenever $ \widehat{\cal{A}}$
contains a positive definite matrix.

If, in addition, $S_A$ is bounded, then $\widephi_{AB}$ in (1') and (3')
can be replaced by $\Phi_{AB}$.
\end{thm}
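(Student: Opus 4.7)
My approach would walk through the implications in the order stated, saving the delicate equivalence $(1) \iff (2')$ for the end where the main technical work lies.

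The easier links come first. $(3) \iff (3')$ is an immediate application of Proposition~\ref{prop:containment=maps} parts~(1)--(2), where the assumption $S_A \ne \emptyset$ is exactly what is needed for the reverse direction. $(2) \Rightarrow (3)$ is Corollary~\ref{co:inclusion1} combined with Proposition~\ref{prop:containment=psdp}. For $(1) \Rightarrow (1')$, starting from a solution $C \succeq 0$ of the SDFP~\eqref{eq:inclusion}, I would consider the ``padded'' block matrix $\widehat{C} = 0_l \oplus C \in \sym_{(k+1)l}^+$; a direct verification shows that $\widehat C$ satisfies the analogous SDFP for the extended pencil $\widehat{A}(x)$, exhibiting a completely positive extension of $\widehat\Phi_{AB}$ to all of $\sym_{k+1}$. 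Under the auxiliary hypothesis that $\widehat{\cal{A}}$ contains a positive definite matrix, the converse $(1') \Rightarrow (1)$ is obtained by applying Proposition~\ref{prop:comp_pos_operator} to extend $\widehat\Phi_{AB}$ to a CP map with Choi matrix $\widehat{C} \succeq 0$, then extracting a valid $C$ for the original SDFP.

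The heart of the argument is $(1) \iff (2')$. The key observation is that the two matrices appearing in the SDFP and in the initial sos relaxation---namely $C \in \sym_{kl}$ with $k \times k$ block structure of $l \times l$ blocks and $S \in \sym_{kl}$ with $l \times l$ block structure of $k \times k$ blocks---are related by the coordinate permutation $(S_{\alpha\beta})_{ij} := (C_{ij})_{\alpha\beta}$; since this is similarity by a permutation matrix, $S \succeq 0 \iff C \succeq 0$, and the associated CP maps $\Phi_S(X) := (\langle S_{\alpha\beta}, X\rangle)_{\alpha\beta}$ and $X \mapsto \sum_{i,j} X_{ij} C_{ij}$ coincide. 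This yields $(1) \Rightarrow (2')$ immediately by taking the realigned $S$ and $\lambda = 0$: the linear matrix polynomial $B(x) - \Phi_S(A(x)) \equiv 0$ is trivially an sos-matrix. The converse $(2') \Rightarrow (1)$ is the main obstacle: one must upgrade the slack inequality $B_0 \succeq \lambda I_l + \Phi_S(A_0)$ to an exact equality without disturbing the constraints $\Phi_S(A_p) = B_p$ for $p \ge 1$. I would add to $\Phi_S$ a completely positive correction $\Psi$ that vanishes on $A_1, \ldots, A_n$ and sends $A_0$ to $\lambda I_l + P$, with $P := B_0 - \lambda I_l - \Phi_S(A_0) \succeq 0$; a candidate is $\Psi(X) = \tr(NX)(\lambda I_l + P)/\tr(NA_0)$ for a suitable $N \succeq 0$ with $\tr(NA_p) = 0$ for $p \ge 1$ and $\tr(NA_0) > 0$. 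Establishing the existence of such $N$, which uses the standing linear-independence assumption on $A_0, \ldots, A_n$ together with $S_A \ne \emptyset$ (so that a translation puts $A_0 \succeq 0$), is where the technical work sits.

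For $(2') \Rightarrow (2)$ I would compose the freshly established $(2') \Rightarrow (1)$ with the direct implication $(1) \Rightarrow (2)$: from $C \succeq 0$ solving the SDFP one concludes $S_A \subseteq S_B$ by Proposition~\ref{prop:completepos1}, so the moment sequence of any probability measure supported on the non-empty compact set $S_A \times \T_{r,R}(0)$ furnishes a feasible $y$ for~\eqref{eq:contain_lasserre} with $L_y(z^T B(x) z) \ge 0$ and thus $\mu_{\mom}(2) \ge 0$. The boundedness promotion in the final sentence follows from Proposition~\ref{prop:containment=maps}(3): when $S_A$ is bounded, positivity of $\widehat\Phi_{AB}$ transfers to positivity of $\Phi_{AB}$, and the SDFP and sos formulations likewise factor through $\Phi_{AB}$ throughout the chain above.
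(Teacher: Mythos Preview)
Your argument for $(1)\Rightarrow(2)$, and hence for $(2')\Rightarrow(2)$, contains a genuine logical gap. You write that since $S_A\subseteq S_B$, the moment sequence of any probability measure on $S_A\times\T_{r,R}(0)$ is feasible for~\eqref{eq:contain_lasserre} with nonnegative objective, ``and thus $\mu_{\mom}(2)\ge 0$''. But $\mu_{\mom}(2)$ is the infimum over \emph{all} feasible $y$ in the relaxed problem, and the feasible set of the relaxation is strictly larger than the set of truncated moment sequences of measures---that is the entire point of a relaxation. Exhibiting moment sequences with nonnegative objective does not bound the infimum from below. The paper's proof (Theorem~\ref{thm:connection1}) works completely differently: it rewrites the linearized objective as
\[
L_y(z^T B(x)z)=\mathds{1}^T\bigl(L_y(zz^T\otimes A(x))\odot C'\bigr)\mathds{1},
\]
recognises $L_y(zz^T\otimes A(x))$ as a principal submatrix of the localizing matrix $M_1(G_Ay)\succeq 0$, and then invokes the Schur product theorem to conclude $L_y(z^TB(x)z)\ge 0$ for \emph{every} feasible $y$. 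This Hadamard-product argument is the actual content of the implication and is missing from your sketch.

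Your treatment of $(2')\Rightarrow(1)$ also has a gap. The correction map $\Psi(X)=\tr(NX)(\lambda I_l+P)/\tr(NA_0)$ requires $N\succeq 0$ with $\tr(NA_p)=0$ for $p\ge1$ and $\tr(NA_0)>0$, but such $N$ need not exist: already for $k=2$, $A_1=E_{11}$, $A_2=E_{22}$, the orthogonality forces $N=0$. The paper bypasses this entirely by working with the \emph{extended} pencil $\widehat A(x)=1\oplus A(x)$. At relaxation order $t=0$ the data $(S,\lambda,T)$ from~\eqref{eq:contain_sos} (with $T\succeq 0$ the constant sos remainder) correspond, after the same block permutation you identify, to a Choi matrix $\widehat C=(\lambda I_l+T)\oplus C'\succeq 0$ for the extended SDFP; the extra $(0,0)$-block $\widehat C_{00}=\lambda I_l+T$ absorbs the slack in the $p=0$ constraint directly, so no auxiliary $N$ is needed and the equivalence is immediate. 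Your handling of $(1')\Rightarrow(1)$ has the same issue: from a Choi matrix $\widehat C\succeq 0$ for $\widehat\Phi_{AB}$ you cannot in general ``extract'' a $C$ solving the \emph{unextended} SDFP, because the $p=0$ equation picks up the extra term $\widehat C_{00}$.
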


Note that if the spectrahedron $S_A$ is bounded, then
Theorem~\ref{thm:convergence} implies a partial converse of the implication 
$ (2)\Longrightarrow (3) $. 
Namely, if $ \emptyset \neq S_A \subseteq S_B $ and $ S_A $ is bounded, then 
$ \mu_{\mom}(t) \uparrow \mu \geq 0 $ for $ t\rightarrow \infty $. 

Recalling Corollary~\ref{co:inclusion1} and Proposition~\ref{prop:completepos1},
the remaining task is to prove $ (1) \Longrightarrow (2) $ and 
$ (1) \iff (2') $. 
The first is achieved in the following theorem. The proof of the second
statement is straightforward. Indeed, by an easy computation one can check
that for $t=0$ the sos-matrix $S(x)$ is equal to (a permutation of) the matrix
$C$ coming from the SDFP \eqref{eq:inclusion} applied to the extended pencil.

\begin{thm}
\label{thm:connection1}
If the SDFP~\eqref{eq:inclusion} has a solution, then the infimum
$\mu_{\mom}(2)$ of the initial relaxation in~\eqref{eq:contain_lasserre} is
nonnegative.
\end{thm}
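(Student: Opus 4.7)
The plan is to use the given PSD solution $C$ of the SDFP~\eqref{eq:inclusion} to construct, for every feasible $y$ of \eqref{eq:contain_lasserre} at $t=2$, an explicit nonnegativity certificate for $L_{y}(z^T B(x) z)$. The idea is to rewrite the cubic polynomial $z^T B(x) z$ as a sum of terms $\mathbf{u}(z)^T A(x)\, \mathbf{u}(z)$ with each $\mathbf{u}$ a polynomial vector of total degree at most $1$ --- exactly the type of expression whose $L_{y}$-image is controlled by the localizing matrix $M_1(A(x)y)$.

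First I would factor $C = VV^T$ with $V \in \R^{kl \times r}$ and split $V$ row-wise into blocks $V_1, \ldots, V_k \in \R^{l \times r}$, so that $C_{ij} = V_i V_j^T$. This immediately yields
\[
  z^T C_{ij} z \;=\; (V_i^T z)^T (V_j^T z) \;=\; \sum_{s=1}^{r} w_{si}(z)\, w_{sj}(z),
\]
where $w_{si}(z) := (V_i^T z)_s$ is a linear form in $z$. Combining with the affine constraints $B_{p} = \sum_{i,j} a_{ij}^{p} C_{ij}$ from \eqref{eq:inclusion} gives
\[
  z^T B(x) z \;=\; \sum_{i,j=1}^{k} A(x)_{ij}\, (z^T C_{ij} z) \;=\; \sum_{s=1}^{r} \mathbf{w}_s(z)^T A(x)\, \mathbf{w}_s(z),
\]
with $\mathbf{w}_s(z) := (w_{s1}(z), \ldots, w_{sk}(z))^T \in \R^k[z]$ of total degree $1$.

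Next I would invoke the standard identity that, for any polynomial vector $\mathbf{u}(x,z) \in \R^k[x,z]$ of degree at most $1$ with coefficient vector $\vec{u}$ in the basis $[x,z]_1$,
\[
  L_{y}\bigl(\mathbf{u}(x,z)^T A(x)\, \mathbf{u}(x,z)\bigr) \;=\; \vec{u}^T\, M_1(A(x)\,y)\, \vec{u}.
\]
Since $G_A(x,z) = \diag(A(x), g_r(z), g^R(z))$ is block-diagonal, $M_1(A(x)y)$ appears (up to permutation) as a principal block of $M_1(G_A y)$ and is therefore positive semidefinite for any feasible $y$. Plugging in $\mathbf{u} = \mathbf{w}_s$, which has total degree $1$, gives $L_{y}(\mathbf{w}_s^T A(x) \mathbf{w}_s) \ge 0$ for every $s$, and summing over $s$ delivers $L_{y}(z^T B(x) z) \ge 0$, hence $\mu_{\mom}(2) \ge 0$.

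The substantive step is the first one --- recognizing that the PSD factorization of $C$ encodes a representation of $z^T B(x) z$ that lives inside the span of $[x,z]_1$-weighted entries of $A(x)$. The subsequent localizing-matrix identity is routine bookkeeping; notably, neither $M_2(y) \succeq 0$ nor the blocks of $M_1(G_A y)$ coming from $g_r$ and $g^R$ enter the argument, which reflects why the construction carries over verbatim to the $t=0$ step of the sos-hierarchy \eqref{eq:contain_sos} and underlies the equivalence $(1) \Longleftrightarrow (2')$ in Theorem~\ref{thm:implications1}.
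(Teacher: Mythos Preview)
Your argument is correct and is essentially the same as the paper's, just unpacked one layer further. The paper expresses the linearized objective as
\[
  L_{y}(z^T B(x) z) \;=\; \mathds{1}^T\bigl(L_{y}(zz^T\otimes A(x))\odot C'\bigr)\mathds{1}
  \;=\; \bigl\langle L_{y}(zz^T\otimes A(x)),\,C'\bigr\rangle,
\]
observes that $L_{y}(zz^T\otimes A(x))$ is a principal submatrix of $M_1(G_A y)\succeq 0$, and then invokes the Schur product theorem. Your factorization $C=VV^T$ is exactly the standard proof of the Schur product theorem carried out by hand: the resulting identity $\sum_s \vec{u}_s\vec{u}_s^T = C'$ shows that your sum $\sum_s \vec{u}_s^T M_1(Ay)\vec{u}_s$ is the same trace pairing the paper computes. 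The explicit sos-type representation $z^T B(x) z=\sum_s \mathbf{w}_s(z)^T A(x)\,\mathbf{w}_s(z)$ in your version makes the connection to the $t=0$ step of the sos-hierarchy~\eqref{eq:contain_sos} (and hence the equivalence $(1)\Leftrightarrow(2')$) immediately visible, whereas the paper leaves that as a separate remark.
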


\begin{proof}
Assume $C\succeq 0$ is a solution to the SDFP. 
Define the matrix $C'$ via $ (C'_{st})_{i,j} = (C_{ij})_{s,t} $, i.e., a
$ kl\times kl $ block matrix consisting of $l\times l$ blocks of size
$ k\times k $. Since it arises by permuting rows and columns of $C$
simultaneously, $C'$ is positive semidefinite as well.

Since~\eqref{eq:contain_poly} is feasible, the SDP~\eqref{eq:contain_lasserre}
is feasible as well. For any $(x,z)$, the linearity of $L_{y}$ implies for the
objective in~\eqref{eq:contain_lasserre} (see also~\eqref{eq:contain_pos})
\begin{align*}
\begin{split}
  L_{y}(z^T B(x) z) 
  & =  L_{y} \left( z^T \sum_{i,j=1}^k (A(x))_{ij}C_{ij} z \right) 
   =  L_{y} \left( \sum_{i,j=1}^k \sum_{s,t=1}^l z_s z_t  (A(x))_{ij}
(C_{ij})_{s,t} \right) \\
  & =  \sum_{i,j=1}^k \sum_{s,t=1}^l L_{y} \left( z_s z_t  (A(x))_{ij}
\right) (C_{ij})_{s,t} 
   = \mathds{1}^T \left( L_{y} \left( zz^T \otimes A(x) \right) \odot C'
\right) \mathds{1} ,
\end{split}
\end{align*}
where $\odot$ denotes the Hadamard product and $\mathds{1}\in\R^{kl}$ is the
all-one vector.
 
In the Hadamard product, the first matrix is positive semidefinite as a
principal submatrix of
$ 
  M_{1}( G_A y ) 
  = L_{y} \left( b_1(x,z) b_1(x,z)^T \otimes \diag(A(x),g_r(z),g^R(z)) \right) 
$
and $C' \succeq 0$ as stated above.
By the Schur product theorem (see \cite[Theorem 7.5.3]{Horn1994}), the Hadamard
product of the two matrices is positive semidefinite as well. Hence, 
$ L_{y}(z^T B(x) z) \ge 0 $ for any feasible $y$, and $ \mu_{\mom}(2) \ge 0 $.
\end{proof}

\begin{rem} \label{rem:connection1}
a) Theorem~\ref{thm:connection1} can be stated for the polynomial
optimization problem~\eqref{eq:contain_poly} itself. The proof is the same
without the linearization operator $L_{y}$.

\noindent b) The reverse implication in Theorem~\ref{thm:connection1} (and
Theorem~\ref{thm:implications1}), i.e., $ (2) \Longrightarrow (1)$, is not
always true. Example~\ref{ex:ballelliptope} serves as a counterexample.

\noindent c) In terms of positive linear maps (see
Section~\ref{sec:pos_maps}), Theorem~\ref{thm:connection1} states that
$k$-positivity is a sufficient condition for the initial relaxation step to
certify containment. More generally, one can ask about the exact relationship
between the exactness of the $t$-th relaxation step and $(k+2-t)$-positivity
of $\Phi_{AB}$. 
\end{rem}

As seen in the proof of the last theorem, we can always represent the objective
function of the optimization problem~\eqref{eq:contain_lasserre} in terms of a
submatrix of $M_{1}( A y )$ and the matrix $C'$ (where the last one arises by
permuting rows and columns of $C$ simultaneously),
$
  L_{y}(z^T B(x) z) 
  = \mathds{1}^T \left( L_{y} \left( zz^T \otimes A(x) \right) \odot C'
  \right) \mathds{1}.
$
In fact, this expression is just the trace or, equivalently, the scalar product
of these two matrices, i.e.,\ 
\[
  L_{y}(z^T B(x) z) 
  = \tr \left( L_{y} \left( zz^T \otimes A(x) \right) \cdot C' \right)
  = \left\langle  L_{y} \left( zz^T \otimes A(x) \right) , C'
  \right\rangle .
\]
Since $ L_{y} \left( zz^T \otimes A(x) \right) $ is a principal submatrix of
$M_{1}( G_A y )$ which is constrained to be positive semidefinite, the first
entry in the scalar product is positive semidefinite. Therefore, the question 
of whether the objective function is nonnegative on the feasible region reduces
to the question of which conditions on the matrix $C$ (or $C'$) guarantee the
nonnegativity of the scalar product on this set.

Using Theorem~\ref{thm:connection1}, we can extend the exactness results 
from~\cite{Kellner2012} to the hierarchy~\eqref{eq:contain_lasserre}, i.e., in
some cases already the initial relaxation is not only a sufficient condition but
also necessary for containment. More precisely, in these cases the equivalences 
$ (1) \iff (2') \iff (2) \iff (3) $ hold in Theorem~\ref{thm:implications1}.
These results rely on the specific pencil representation of the given
spectrahedra. Before stating the results, we have to agree on a consistent
representation. 

Every polyhedron $P = \{x \in \R^n \, : \, b + Ax \ge 0 \}$ has a natural
representation as a spectrahedron:
\begin{equation}
  P = P_A = \left\{ x \in \R^n \ : \  A(x) = 
  \begin{bmatrix} 
    a_1(x) & 0 & 0 \\ 0& \ddots & 0 \\ 0 & 0 & a_k(x)
  \end{bmatrix}
  \succeq 0 \right\},
\label{eq:polytope}
\end{equation}
where $a_i(x)$ abbreviates the $i$-th entry of the vector $b+Ax$. 
$P_A$ contains the origin if and only if the inequalities can be scaled so that
$b = \mathds{1}_k$, where $\mathds{1}_k$ denotes the all-ones vector in $\R^k$.
Hence, in this case, $A(x)$ is monic, and it is called the \emph{normal form} 
of the polyhedron $P_A$.

A centrally symmetric ellipsoid with axis-aligned semiaxes of lengths 
$ a_1,\ldots, a_n $ can be written as the spectrahedron $S_A$ of the monic
linear pencil
\begin{equation}
  \label{eq:ellipsoid}
  A(x) \ = \ I_{n+1}
       + \sum_{p=1}^{n} \frac{x_p}{a_p}(E_{p,n+1}+E_{n+1,p}).
\end{equation}
We call~\eqref{eq:ellipsoid} the \emph{normal form of the ellipsoid}.
Specifically, for the case of all semiaxes having the same length
$ \nu := a_1 = \cdots = a_n $, this gives the \emph{normal form of a ball} with
radius $\nu$.

We are now ready to state the exactness results in
Corollaries~\ref{co:exactness} and~\ref{co:scaling}.

\begin{cor}
\label{co:exactness}
Let $ A(x)\in\sym_k[x] $ and $ B(x)\in\sym_l[x] $ be linear pencils. 
In the following cases, the initial relaxation step $(t=2)$
in~\eqref{eq:contain_lasserre} certifies containment of $S_A$ in $S_B$.
\begin{enumerate}
  \item 
  if $A(x)$ and $B(x)$ are normal forms of ellipsoids
  (both centrally symmetric, axis-aligned semiaxes),
  \item 
  if $A(x)$ and $B(x)$ are normal forms of a ball 
  and an $\HH$-polyhedron, respectively, 
  \item 
  if $B(x)$ is the normal form of a polytope,
  \item 
  if $\widea(x)$ (see \eqref{eq:extended_pencil}) is the extended form of 
  a spectrahedron and $B(x)$ is the normal form of a polyhedron.
\end{enumerate}
\end{cor}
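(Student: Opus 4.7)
The plan is to reduce Corollary~\ref{co:exactness} to a straightforward combination of two already-established facts: the exactness results for the solitary complete-positivity criterion in \cite{Kellner2012}, and the implication $(1) \Longrightarrow (2)$ in Theorem~\ref{thm:implications1} (which is the content of Theorem~\ref{thm:connection1}). The one direction that requires nothing new is the sufficiency: $\mu_{\mom}(2) \ge 0$ always implies $S_A \subseteq S_B$ by Corollary~\ref{co:inclusion1}. So the substantive task is to prove the converse in each of the four listed cases, namely that $S_A \subseteq S_B$ forces $\mu_{\mom}(2) \ge 0$.

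For this converse, the chain of reasoning I would carry out is the following. First, invoke the case-by-case exactness results from \cite{Kellner2012}: in each of the four settings (normal-form ellipsoid inside normal-form ellipsoid; normal-form ball inside normal-form $\HH$-polyhedron; arbitrary inner spectrahedron inside a normal-form polytope; extended-pencil spectrahedron inside a normal-form polyhedron), the authors established that containment $S_A \subseteq S_B$ is equivalent to solvability of the SDFP~\eqref{eq:inclusion} with a positive semidefinite matrix $C$ (after, where appropriate, passing to the extended pencil $\widea(x)$ to handle cases in which the inner spectrahedron is unbounded or does not contain the origin in its interior). Second, feed this solvability into Theorem~\ref{thm:connection1}, which upgrades any such positive semidefinite $C$ into a certificate that $\mu_{\mom}(2) \ge 0$ via the Schur product argument there.

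Combining these two ingredients yields the desired exactness: containment gives a feasible $C$, which gives $\mu_{\mom}(2) \ge 0$, and the monotonicity from Corollary~\ref{co:inclusion1} then propagates nonnegativity to every subsequent relaxation level, i.e., $\mu_{\mom}(t) \ge 0$ for all $t \ge 2$. Cases (1)–(3) apply directly to $\Phi_{AB}$ when the hypotheses of Theorem~\ref{thm:implications1} (nonemptiness and, where needed, boundedness of $S_A$) are met; case (4) is the reason for working with the extended pencil $\widea(x)$ and $\widephi_{AB}$, which is exactly why the statement is phrased in terms of $\widea(x)$ there.

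The principal obstacle would lie in the case analyses from \cite{Kellner2012}, which require explicit constructions of the block matrix $C = (C_{ij})$ with prescribed linear combinations $B_p = \sum_{i,j} a^p_{ij} C_{ij}$ while maintaining $C \succeq 0$; the constructions exploit the very specific structure of diagonal polytope pencils and of the rank-one off-diagonal structure of the ellipsoid normal form~\eqref{eq:ellipsoid}. Since these constructions are already done in \cite{Kellner2012}, the corollary itself is obtained here by assembly rather than by reopening those arguments, and no new computation beyond citing Theorem~\ref{thm:connection1} is needed.
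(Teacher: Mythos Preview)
Your proposal is correct and follows essentially the same approach as the paper: the paper's proof is a one-line citation of \cite[Theorem~4.8]{Kellner2012} together with Theorem~\ref{thm:connection1}, which is precisely the two-step assembly you describe. Your version is more detailed in spelling out the role of Corollary~\ref{co:inclusion1} for sufficiency and the use of the extended pencil in case~(4), but the logical content is the same.
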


\begin{proof}
Follows directly from~\cite[Theorem 4.8]{Kellner2012} and
Theorem~\ref{thm:connection1}.
\end{proof}

Statements (2) to (4) in the previous corollary can also be deduced from the
point of view of positive and completely positive maps introduced in
Section~\ref{sec:pos_maps}. This follows from a result 
in~\cite[Proposition 1.2.2]{Arveson1969} stating that positive maps into
commutative \ca s are completely positive. The second exactness result states
that the initial relaxation step can always certify containment of a scaled
situation.

\begin{cor}
\label{co:scaling}
Let $ A(x)\in\sym_k[x] $ and $ B(x)\in\sym_l[x] $ be monic linear pencils such
that $S_A$ is bounded. 
Then there exists $\nu > 0$ such that the initial relaxation step certifies 
$ \nu S_A \subseteq S_B $, where 
$ \nu S_A = \{x \in \R^n \, : \, A^{\nu}(x) := A(\frac{x}{\nu}) \succeq 0\} $ 
is the scaled spectrahedron.
\end{cor}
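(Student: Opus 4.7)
My plan is to verify property (1) of Theorem~\ref{thm:implications1} for the scaled pencil $A^{\nu}(x) = A(x/\nu)$, since the implication $(1)\Rightarrow(2)$ then immediately gives $\mu_{\mom}(2)\ge 0$ for the scaled containment problem. Writing out the SDFP~\eqref{eq:inclusion} for $A^{\nu}$ and $B$ and using that $A_0 = I_k$ and $B_0 = I_l$, the task is to exhibit, for some $\nu > 0$, a block matrix $C = (C_{ij})_{i,j=1}^{k} \in \psd_{kl}$ satisfying
\[
  \sum_{i=1}^{k} C_{ii} = I_l, \qquad
  \sum_{i,j=1}^{k} (A_p)_{ij} C_{ij} = \nu B_p \quad (p = 1,\ldots,n).
\]
I would construct such $C$ as a small perturbation of a strictly positive definite solution $C_0$ of the $\nu = 0$ system.

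The main step is to produce $C_0 \in \pd_{kl}$ satisfying $\sum_i (C_0)_{ii} = I_l$ and $\sum_{ij}(A_p)_{ij} (C_0)_{ij} = 0$ for $p \ge 1$. I first reduce to the scalar case $l = 1$, where the task becomes: find $\rho \in \pd_k$ with $\tr\rho = 1$ and $\tr(A_p \rho) = 0$ for $p \ge 1$. If this SDP failed to be strictly feasible, the theorem of alternatives would produce either a direction $y \in \R^n \setminus\{0\}$ with $\tilde A(y) = \sum_p y_p A_p \succ 0$ (plain infeasibility), or a nonzero $v \in \R^k$ and $\alpha \in \R^n$ with $vv^T = \tilde A(\alpha)$ (failure of Slater via a proper face of $\psd_k$). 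In either case one obtains $z \neq 0$ with $\tilde A(z) \succeq 0$, and since $A_0 = I_k \succ 0$ this yields the entire ray $\{tz : t \ge 0\} \subseteq S_A$, contradicting boundedness. Hence a strictly PD $\rho$ exists, and $C_0 := \rho \otimes I_l \in \pd_{kl}$ solves the $\nu = 0$ system for arbitrary $l$.

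With $C_0 \succ 0$ in hand, the perturbation is routine. Boundedness of $S_A$ together with $A_0 = I_k$ also force linear independence of $A_0, A_1, \ldots, A_n$: otherwise $I_k \in \linspan(A_1,\ldots,A_n)$ would produce some $y$ with $\tilde A(y) = I_k \succ 0$, again an unbounded recession ray. Dualising, the linear map $\phi : \sym_{kl} \to \sym_l^{\,n+1}$, $D \mapsto \bigl(\sum_i D_{ii},\,(\sum_{ij}(A_p)_{ij} D_{ij})_{p=1}^{n}\bigr)$, is surjective. Choose any $D$ with $\phi(D) = (0, B_1, \ldots, B_n)$ and set $C_\nu := C_0 + \nu D$. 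Then $C_\nu$ satisfies the scaled SDFP equations by construction, and openness of $\pd_{kl}$ around $C_0$ guarantees $C_\nu \succ 0$ for all sufficiently small $\nu > 0$. Applying Theorem~\ref{thm:implications1}$(1)\Rightarrow(2)$ concludes the proof.

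The main obstacle is the passage from mere feasibility to strict feasibility of the $\nu = 0$ SDFP: PSD solvability alone would be insufficient, because the perturbation $C_0 + \nu D$ could leave $\psd_{kl}$ immediately if $C_0$ lies on its boundary. Establishing Slater's condition via the two recession-cone arguments above is exactly where the full strength of the boundedness hypothesis on $S_A$ is used; linear independence of $A_1, \ldots, A_n$ alone would not be enough.
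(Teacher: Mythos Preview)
Your proposal is correct and follows the same overall strategy as the paper: establish feasibility of the SDFP~\eqref{eq:inclusion} for the scaled pencil $A^{\nu}$ and then invoke the implication $(1)\Rightarrow(2)$ of Theorem~\ref{thm:implications1} (equivalently, Theorem~\ref{thm:connection1}). The only difference is that the paper delegates the first step to \cite[Proposition~6.2]{Kellner2012}, whereas you supply a self-contained construction via the strictly feasible point $C_0=\rho\otimes I_l$ and a perturbation along a preimage $D$ of $(0,B_1,\ldots,B_n)$.

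One minor imprecision: in your Case~2 (failure of Slater), a reducing certificate need not be rank one, so the specific form $vv^T=\tilde A(\alpha)$ is not guaranteed. The clean way to phrase the alternative is simply to separate the open cone $\pd_k$ from the subspace $\{A_1,\ldots,A_n\}^{\perp}$; this yields directly a nonzero $M=\tilde A(y)\in\psd_k$, which is exactly the conclusion you use. No case split is needed, and the remainder of your argument (ray in $S_A$ from $A_0=I_k$ and $\tilde A(y)\succeq 0$, linear independence of $A_0,\ldots,A_n$, surjectivity of $\phi$, openness of $\pd_{kl}$) goes through as written.
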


\begin{proof} 
This follows from \cite[Proposition~6.2]{Kellner2012} and
Theorem~\ref{thm:connection1}.
\end{proof}

\section{Numerical experiments}
\label{sec:application}

While the complexity of the containment question for spectrahedra is co-NP-hard
in general, the relaxation techniques introduced in this paper give a practical
way of certifying containment. We implemented the hierarchy and applied it to
several examples. The criterion performs well already for relaxation orders as
low
as $t=2,3$, as we will witness throughout this section.

We start by reviewing an example from~\cite{Kellner2012} in 
Section~\ref{sec:diskcontain}, showing that the new hierarchical relaxation 
indeed outperforms the complete positivity relaxation. We then give an 
overview on the performance of the relaxation on some more examples.

In Section~\ref{sec:random}, we compare the results from the moment approach
\eqref{eq:contain_lasserre} to the alternative sum-of-squares approach
\eqref{eq:contain_sos}. To assess the performance of both algorithms, we
compare results as well as runtimes of the algorithms on randomly generated
pencils of varying sizes. 

We use the example of computing the symmetric
circumradius of a spectrahedron to show how the relaxation can be simplified in
the case when the outer spectrahedron can be described as the positivity region
of a single polynomial. This is discussed in Section~\ref{sec:radii}.

For our computations, we modeled the hierarchy using high-level
YALMIP~\cite{YALMIP, Loefberg2009} code. We used
MOSEK 7 \cite{Andersen2003} as external solver for the optimization problems
defined in YALMIP. The \textsc{Matlab} version
used was R2011b, running on a desktop computer with Intel Core i3-2100 @ 3.10
GHz and 4 GB of RAM. 

Throughout this section, we use the following notation. As before, integer $n$
stands for the number of variables in the pencils, $k$ and $l$ for the size of
the pencil $A(x)$ and $B(x),$ respectively. For monic pencils, we examine
$\nu-$scaled spectrahedra $\nu S_A$ as defined in Corollary~\ref{co:scaling}. 
We denote the (numerical) optimal value of the moment relaxation
\eqref{eq:contain_lasserre} of order $t$ by $\mu_{\mom}(t)$, the (numerical)
optimal value of the alternative relaxation \eqref{eq:contain_sos} of of order
$t$ by $\lambda_{\sos}(t)$ . 
In the tables, ``sec`` states the time in seconds for setting up the problem in
YALMIP and solving it in MOSEK.

If not stated otherwise, the inner radius is set to $ r=1 $ and outer radius
to $R=2$ in relaxation~\eqref{eq:contain_lasserre}.

\subsection{Numerical computations}
\label{sec:diskcontain}
We review the example of containment of two disks from \cite{Kellner2012}. The
complete positivity criterion from that work certifies the containment only if
the disk on the inside is scaled small enough. Theorem~\ref{thm:connection1}
shows that any containment certified by the complete positivity criterion is
certified by the hierarchical relaxation. In the following example we go one
step further, showing that the latter performs strictly better than the
feasibility criterion already in small relaxation orders.

 \begin{ex}\label{ex:diskdisk}
Consider the monic linear pencils 
$ 
  A^\nu (x) = I_3 + x_1 \frac{1}{\nu} ( E_{1,3} + E_{3,1} ) 
    + x_2 \frac{1}{\nu} (E_{2,3} + E_{3,2} ) 
  \in \sym_3 [x] 
$ 
with parameter $ \nu > 0$ and 
$
  B(x) = I_2 + x_1 ( E_{1,1} - E_{2,2} ) + x_2 ( E_{1,2} + E_{2,1} )
 \in \sym_2[x] .
$
The spectrahedra defined by the pencils are the disk of radius $\nu > 0$
centered
at the origin, $ \nu S_A = \B_{\nu}(0)$, and the unit disk $S_B = \B_1(0)$,
respectively. Clearly, $ \nu S_A \subseteq S_B$ if and only if $ 0 < \nu
\leq 1$. In
particular, for $ \nu = 1 $, both pencils define the unit disk $\B_1(0) = S_A =
S_B$.

In~\cite[Section 6.1]{Kellner2012} it is shown that the complete positivity
criterion for the containment problem $ \nu S_A \subseteq S_B$ is satisfied if 
$ 0 < \nu \leq \frac{1}{2} \sqrt{2}$.
Remarkably, the performance of relaxation~\eqref{eq:contain_lasserre} depends
on the choice of the parameters $r$ and $R$. Table~\ref{tab:numeric-disk}
contrasts the results of the moment relaxation with parameters $r = 1, R = 2$
with the results of the complete positivity criterion for the problem $\nu S_A
\subseteq S_B$. 
Our numerical computations shows that the semidefinite relaxation of order $t=2$
certifies the same cases as the complete positivity criterion. For $t =3$ we
have exactness of the criterion. 

\begin{table}
\begin{tabular}{l|c|rr|rr}
  \toprule
  $\nu$ & SDFP~\eqref{eq:inclusion} & $\mu_{\mom}(2)$ & sec & $\mu_{\mom}(3)$
& sec \\
  \midrule
0.7&feasible&0.0101&0.11&0.300&0.17\\
0.707&feasible&0.000151&0.1&0.293&0.16\\
$1/\sqrt{2}$&feasible&7.29$\cdot 10^{-11}$&0.09&0.293&0.16\\
0.708&infeasible&-0.000632&0.1&0.292&0.16\\
0.8&infeasible&-0.0657&0.09&0.200&0.16\\
1&infeasible&-0.207&0.1&9.78$\cdot 10^{-09}$&0.19\\
1.1&infeasible&-0.278&0.1&-0.100&0.16\\
  \bottomrule
\end{tabular}
\\[+0.5ex]
\caption{Disk $\nu S_A$ in disk $S_B$ for two different representations and
various radii $\nu$ of the inner disk as described in Example~\ref{ex:diskdisk}.
}
\label{tab:numeric-disk}
\end{table}

When choosing $r = R = 1$, the semidefinite
relaxation~\eqref{eq:contain_lasserre} is exact already for relaxation order
$t=2$ and returns the same optimal values as for relaxation order $t=3$. 
This choice of parameters however leads to numerical problems in the solver
occasionally. Furthermore the example of the two disks is the only one we have
found, where results for orders $t=2$ and $t=3$ differ if $r$ and $R$ are
chosen distinct. In all other examples, results seem to be exact already for
$t=2$. Therefore we advise to use $r = 1$ and $R = 2$ in general applications.
\end{ex}

In the next example, we examine the containment of a ball in an elliptope. The
elliptope is a nice example of a spectrahedron that is described by a pencil
consisting of very sparse matrices. While the pencil is of small size, it
is occupied by a large number of variables. 

\begin{ex} \label{ex:ballelliptope}
For this example, the pencil description of the ball is as
in~\eqref{eq:ellipsoid}. The \emph{elliptope}~\eqref{eq:elliptope} can be
described as the positivity domain of a symmetric pencil with ones on the
diagonal and distinct variables in the remaining positions; see~\cite[Section
2.1.3]{bpt-2013}. 

As exhibited in Table \ref{tab:numeric-containment}, the ball of radius
$\frac{1}{2}$ in dimensions $n = 3, 6, 10$ and $15$ is contained in the
elliptope of the respective dimension. The computational time grows in the
number of variables, but even dimensions as high as 15 are in the scope of
desktop computers if the size $l$ of the pencil $B(x)$ is moderate. 

\begin{table}
\begin{tabular}{rrr|rr|rr}
  \toprule
 \multicolumn{3}{c}{size}& \multicolumn{2}{|c}{objective value}
&\multicolumn{2}{|c}{sec} \\
$n$ & $k$ & $l$ & $\mu_{\mom}(2)$ & $\lambda_{\sos}(0)$ &
$\mu_{\mom}(2)$ & $\lambda_{\sos}(0)$ \\
 \midrule
 3 &  4 & 3 & 0.293 & 0.293 &   0.12 & 0.81\\
 6 &  7 & 4 & 0.134 & 0.134 &   1.46 & 0.77\\
10 & 11 & 5 & 0.106 & -$3.972\cdot 10^{-8}$ & 28.51 & 3.61\\
15 & 16 & 6 & 0.087 & -0.118 & 588.57 & 65.47 \\
  \bottomrule
\end{tabular}
\\[+0.5ex]
\caption{Computational test of containment of ball in elliptope as described in
Example~\ref{ex:ballelliptope}.}
\label{tab:numeric-containment}
\end{table}

Interestingly, while the moment relaxation is slower than the sum-of-squares
approach in this example (for $t=0$ and $t=2$, respectively), the latter
approach fails to be exact in dimension $n=10,15$. When trying to compute the
next relaxation step $\lambda_{\sos}(1)$ for $(n,k,l)=(10,11,5)$, we stopped
the computation after about 15 hours. 
Note that the SDFP~\eqref{eq:inclusion} is solvable for $(n,k,l)=(10,11,5)$ but
not solvable for $(n,k,l)=(15,16,6)$. Thus, for $(n,k,l)=(15,16,6)$, this
example serves as a counterexample for the reverse statement of
Theorem~\ref{thm:connection1} (or, equivalently, for the implication $ (2)
\Longrightarrow (1) $ in Theorem~\ref{thm:implications1}).
\end{ex}

\begin{ex}
Consider the linear map
\[
  \Phi:\ \sym_{3}\to\sym_{3},\ A\mapsto 2\begin{bmatrix}
  A_{11}+A_{22} & & \\ & A_{22}+A_{33} & \\ & & A_{33}+A_{11} \end{bmatrix} -A .
\]
Due to Choi~\cite{Choi1975bi}, the map $\Phi$ is (1- and 2-)positive but not
completely positive. Indeed, the SDFP~\eqref{eq:inclusion} is not feasible.
Using hierarchy~\eqref{eq:contain_lasserre} (with $r=R=1$), the initial
relaxation step is also not feasible but for $t=3$ the relaxation yields a small
positive value implying positivity of $\Phi$.
\end{ex}

\subsection{Randomly Generated Spectrahedra} 
\label{sec:random}

We applied both hierarchical criteria, the moment hierarchy
\eqref{eq:contain_lasserre} and the alternative sum-of-squares approach
\eqref{eq:contain_sos} to several instances of linear pencils with random
entries. 

For the experiments in this section, we generate coefficient matrices $A_1,
\ldots, A_n$ by assigning random numbers
to the off-diagonal entries of the matrices. Numbers are drawn
from a uniform distribution on $[-1,1]$. 
The generated matrices are sparse in the sense that roughly $35\%$ of the
off-diagonal entries are nonzero. The matrix for the constant term, $A_0,$ is
generated in the same way, but features ones on the diagonal. This choice leads
to bounded spectrahedra in most cases, namely when the matrices $A_0, \ldots,
A_k$ are linearly independent. Unbounded spectrahedra and spectrahedra
without interior are discarded.

The pencil of the second spectrahedron $S_B$ is generated in the same way,
except that the diagonal entries of $B_0$ are chosen larger. This has the effect
that the corresponding spectrahedra are scaled and the containment 
$S_A \subseteq S_B$ is more likely to happen. 

\begin{ex} \label{ex:random}
We apply the hierarchies to a range of problems with varying dimensions
and pencil sizes as reported in Table \ref{tab:random}. 
\begin{table}
\small
 \begin{tabular}{l | r r r |rrrr|rrrr}
  \toprule
& \multicolumn{3}{c}{size}& \multicolumn{4}{|c}{objective value}
&\multicolumn{4}{|c}{sec} \\
no. & $n$ & $k$ & $l$ & $\mu_{\mom}(2)$ & $\mu_{\mom}(3)$ & 
$\lambda_{\sos}(0)$ & $\lambda_{\sos}(1)$ & $\mu_{\mom}(2)$&  $\mu_{\mom}(3)$ &
$ \lambda_{\sos}$(0) & $\lambda_{\sos}(1)$\\
 \midrule
1 & 2 & 4 & 4 & 0.330 & 0.330 & 0.330 & 0.330 & 0.26 & 2.49 & 0.29 & 2.04\\
2 & 2 & 6 & 4 & 1.459 & 1.459 & 1.459 & 1.459 & 0.16 & 2.95 & 0.34 & 9.98\\
3 & 2 & 4 & 6 & -2.009 & -2.009 & -2.009 & -2.009 & 0.38 & 31.03 & 0.42 &
10.61\\
4 & 2 & 6 & 6 & -0.209 & -0.209 & -0.209 & -0.209 & 0.36 & 31.53 & 0.72 &
76.23\\
5 & 3 & 4 & 4 & 0.156 & 0.156 & 0.156 & 0.156 & 0.20 & 6.35 & 0.30 & 3.83\\
6 & 3 & 6 & 4 & 0.332 & 0.332 & 0.332 & 0.332 & 0.22 & 8.86 & 0.34 & 24.52\\
7 & 3 & 4 & 6 & -6.918 & -6.906 & -6.918 & -6.918 & 0.82 & 117.3 & 0.45 & 28.3\\
8 & 3 & 6 & 6 & 0.028 & 0.028 & 0.028 & 0.028 & 0.66 & 84.71 & 0.71 & 207.84\\
9 & 4 & 4 & 4 & -3.164 & -3.164 & -3.164 & -3.164 & 1.33 & 32.64 & 0.97 &
10.19\\
10 & 4 & 6 & 4 & 0.593 & 0.593 & 0.593 & 0.593 & 0.32 & 27.88 & 0.35 & 66.39\\
11 & 4 & 4 & 6 & -0.938 & -0.938 & -0.938 & -0.938 & 1.21 & 326 & 0.45 & 64.41\\
12 & 4 & 6 & 6 & -0.251 & -0.251 & -0.251 & -0.251 & 1.43 & 317.08 & 0.81 &
567.07\\
  \bottomrule
 \end{tabular}
\\[+0.5ex]
\caption{Computational test of containment of randomly generated spectrahedra
as described in Example~\ref{ex:random}.  }
\label{tab:random}
\end{table}
To illustrate the approach, we provide the pencils for experiment no. 1 below.
\[ 
A(x) = \begin{bmatrix} 
1 & 0.2528x_1+0.3441x_2 & 0 & 0 \\
0.2528x_1+0.3441x_2 & 1 & 0 & -0.1314x_1 \\ 
0 & 0 & 1 & 0.7969x_2 \\  
0 & -0.1314x_1 & 0.7969x_2 & 1 \\ 
 \end{bmatrix}
\]

\[
B(x) = \begin{bmatrix}
2 & 0.8454 & 0 & 0 \\  
0.8454 & 2 & -0.2489x_1-0.4063x_2 & 0 \\ 
0 & -0.2489x_1-0.4063x_2 & 2 & 0.3562x_1 \\  
0 & 0 & 0.3562x_1 & 2 \\ 
\end{bmatrix}
\]

For this experiment with randomly generated matrices, the
truth value of the containment question is unknown a priori. In the case of a
positive
objective value, our criterion yields a certificate for the containment. For
negative objective values, we inspected plots of the spectrahedra
to check appropriateness of the criterion. Plots of the spectrahedra from the
two-dimensional experiments no.  1--4 are shown in Figure~\ref{fig:2dspecs}. 

 In cases of higher dimension ($n > 3$), we examined projections of the
spectrahedra. See Figure~\ref{fig:projections} for projections of the
spectrahedra from experiment no. 12 to different planes. The small negative
objective value reported in Table~\ref{tab:random} suggests that there is only a
small overlap of $S_A$ over the boundary of $S_B$. Indeed, the projections to
the coordinate planes suggest that $S_A$ is contained in $S_B$. But when
projecting to the plane spanned by $0.3 x_1 +x_2$ and $x_3$, we see that the
spectrahedra are not contained. 

\begin{figure}
\centering
\begin{subfigure}{.25\textwidth}
  \centering
  \includegraphics[width=\linewidth]{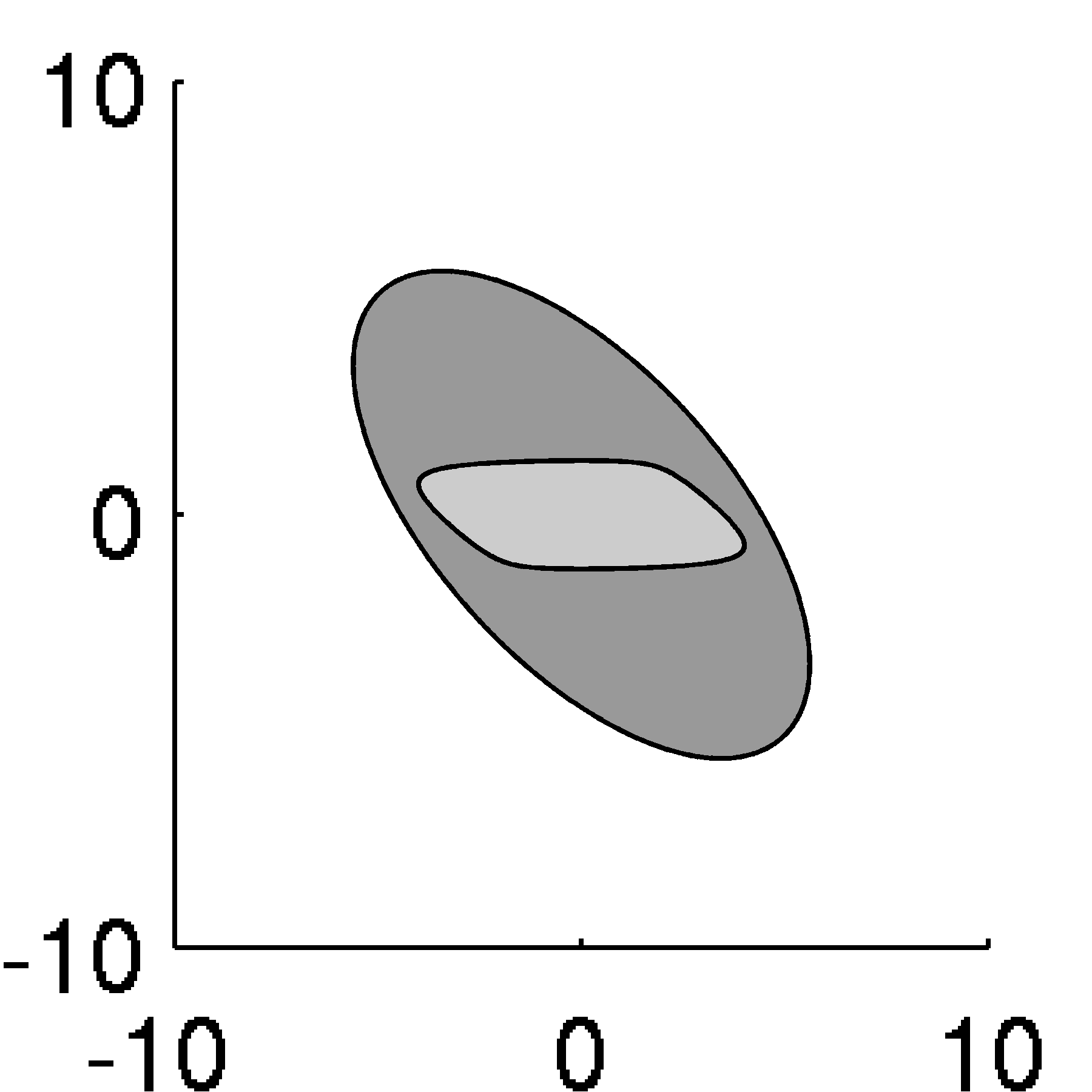}
\end{subfigure}%
\begin{subfigure}{.25\textwidth}
  \centering
  \includegraphics[width=\linewidth]{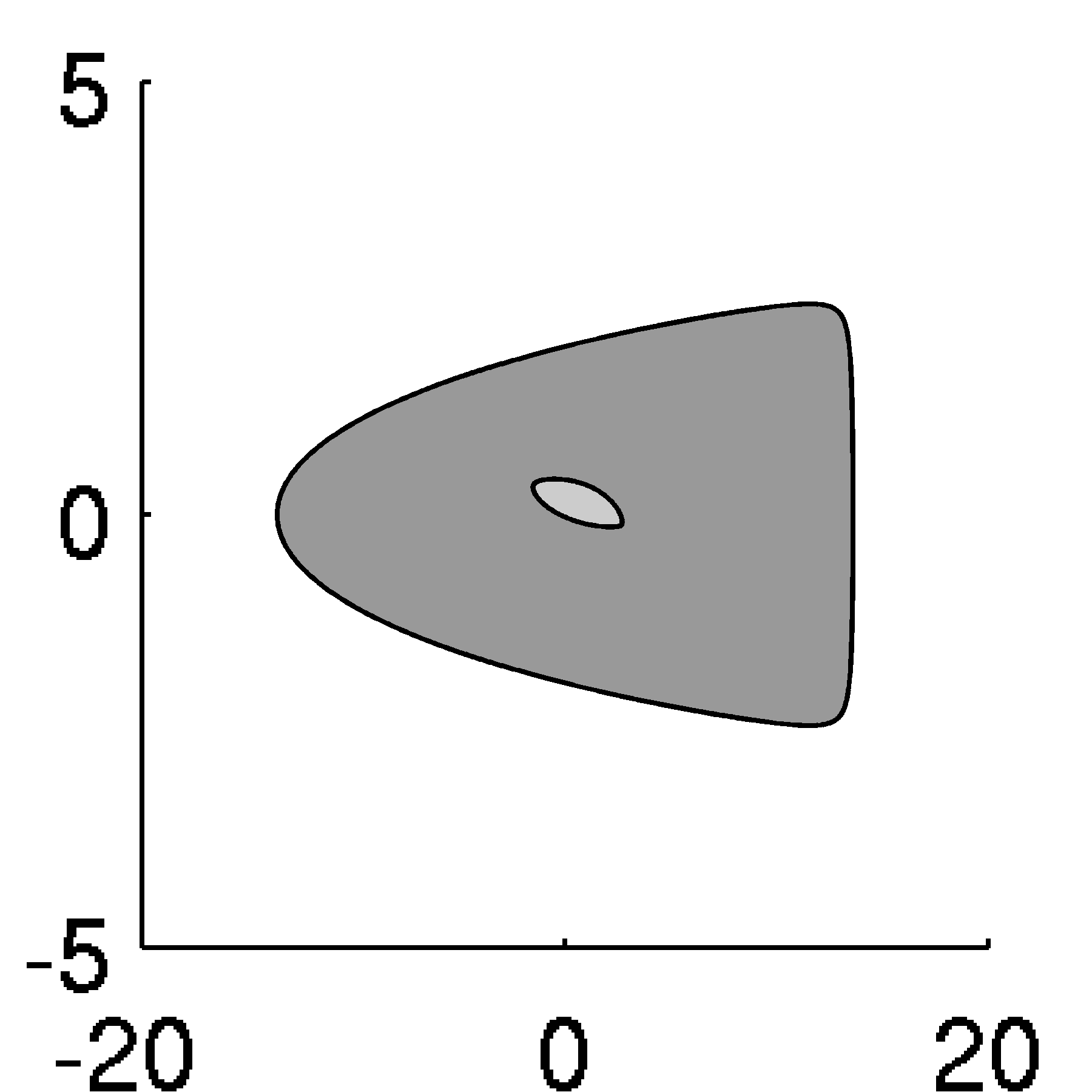}
\end{subfigure}%
\begin{subfigure}{.25\textwidth}
  \centering
  \includegraphics[width=\linewidth]{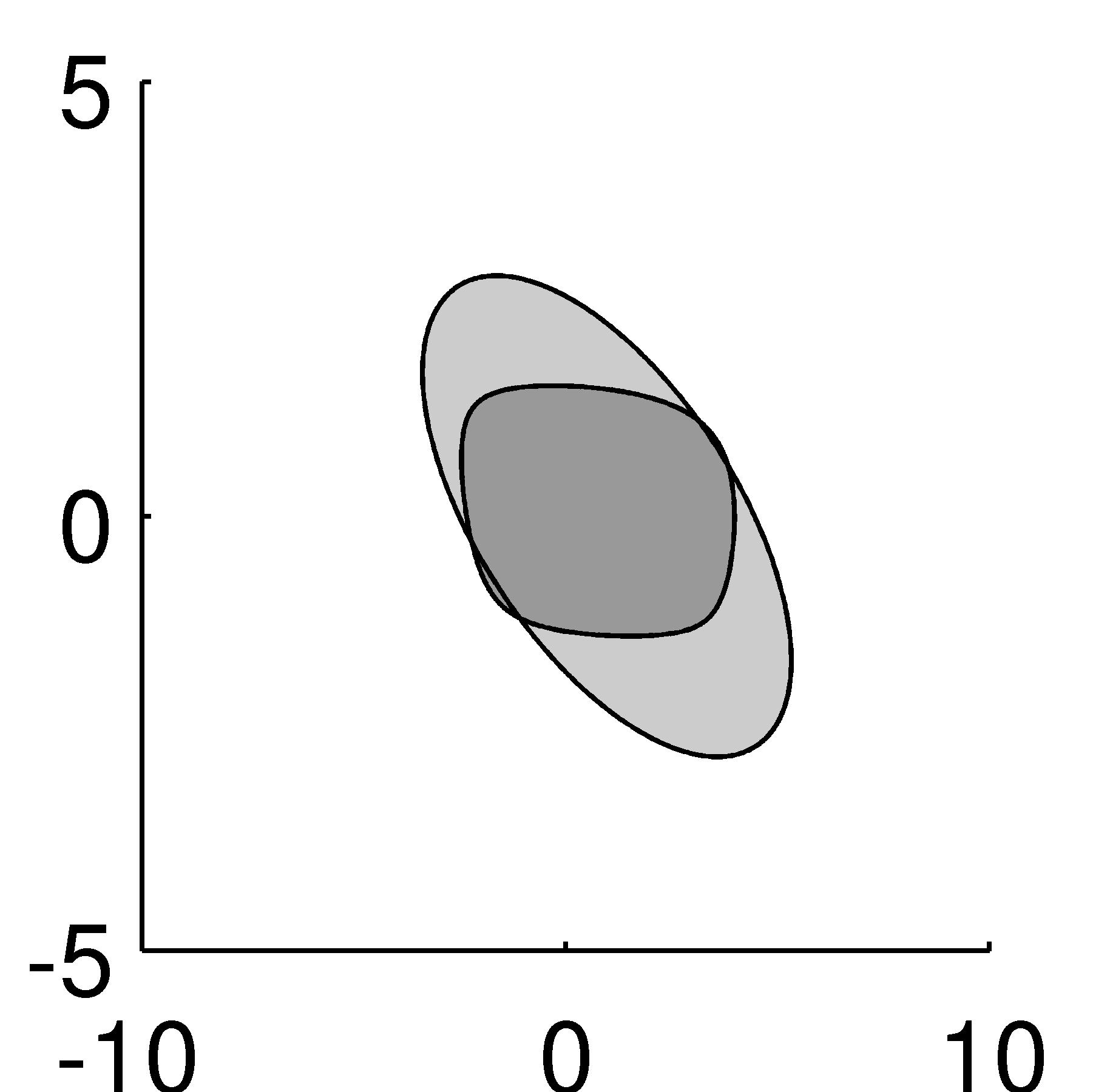}
\end{subfigure}%
\begin{subfigure}{.25\textwidth}
  \centering
  \includegraphics[width=\linewidth]{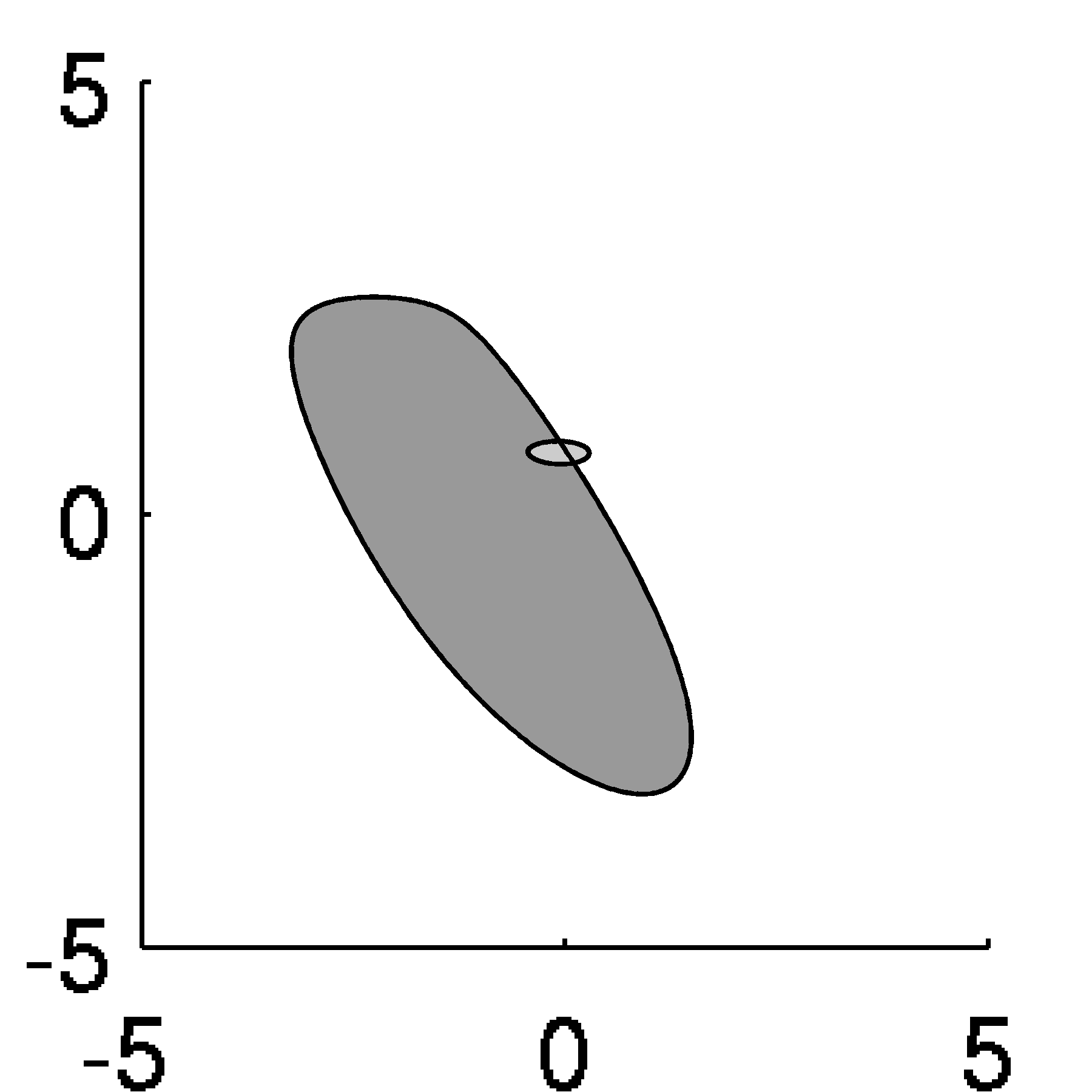}
\end{subfigure}%
\caption{Spectrahedra of experiments no. 1--4 from Table~\ref{tab:random}. 
$S_A$: light grey, $S_B$: dark grey. }
\label{fig:2dspecs}
\end{figure}

\begin{figure}
\centering
\begin{subfigure}{.25\textwidth}
  \centering
  \includegraphics[width=\linewidth]{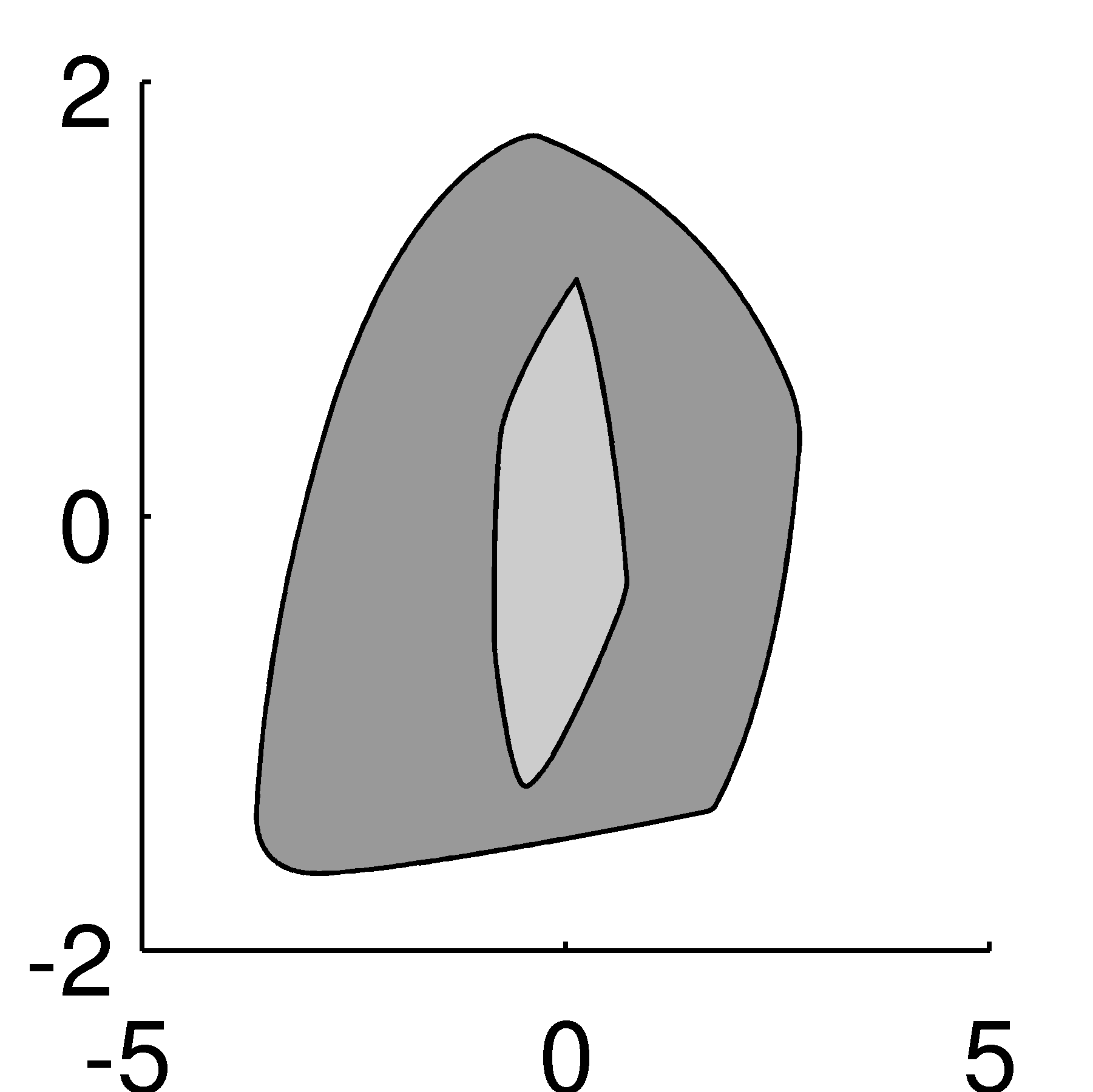}
\end{subfigure}%
\begin{subfigure}{.25\textwidth}
  \centering
  \includegraphics[width=\linewidth]{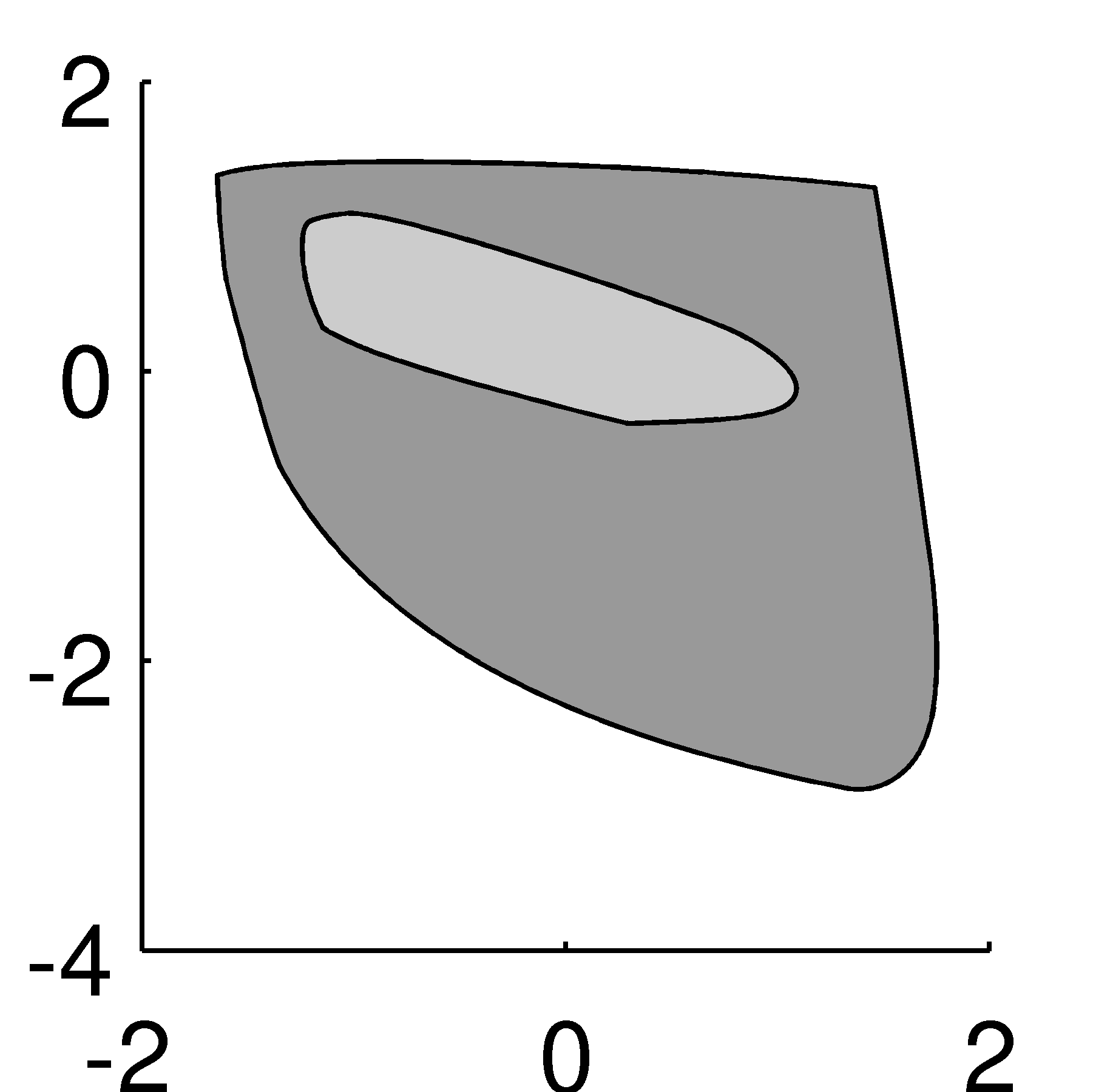}
\end{subfigure}%
\begin{subfigure}{.25\textwidth}
  \centering
  \includegraphics[width=\linewidth]{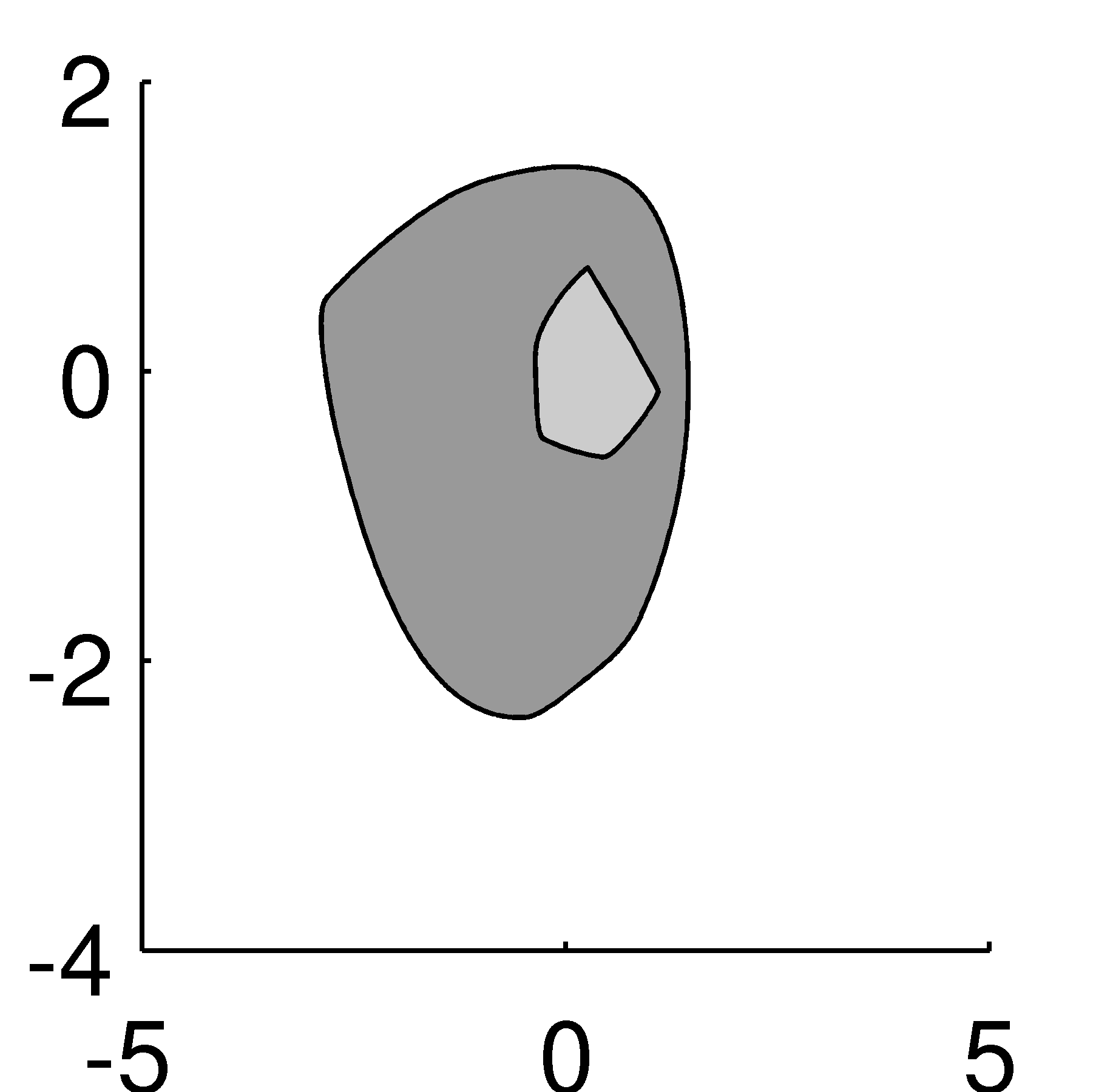}
\end{subfigure}%
\begin{subfigure}{.25\textwidth}
  \centering
  \includegraphics[width=\linewidth]{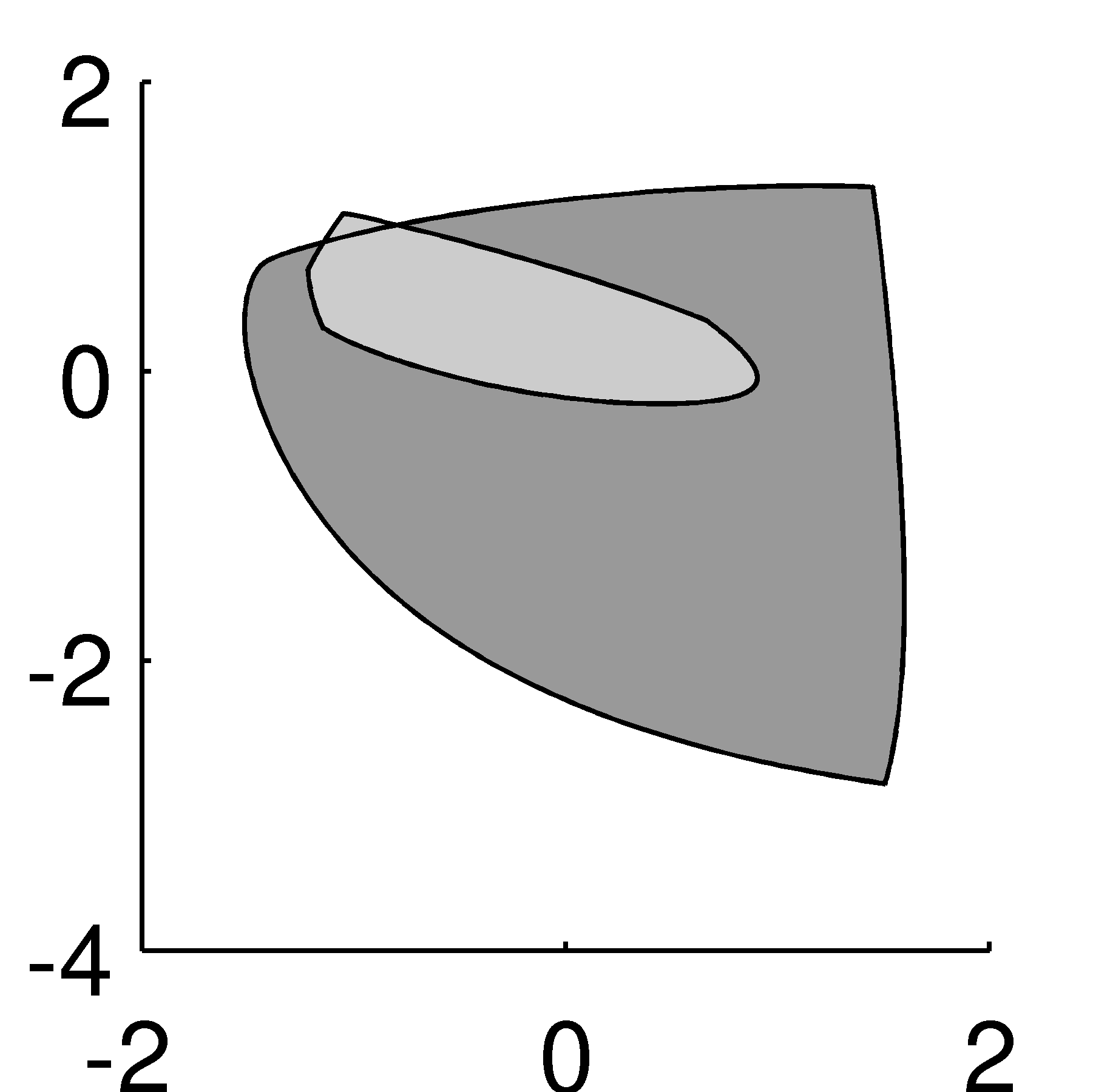}
\end{subfigure}%
\caption{Projections of the 4-dimensional spectrahedron no. 12 from
Table~\ref{tab:random}. 
$S_A$: light grey, $S_B$: dark grey. Projections to planes spanned by
$(x_1,x_2)$, $(x_2,x_3)$, $(x_3,x_4)$ and $(0.3 x_1 +x_2, x_3)$.}
\label{fig:projections}
\end{figure}

In all cases we examined, the results from the criterion correspond with the
expectations we had from inspecting the plots.
Remarkably, for the randomly generated spectrahedra, the results of the
relaxations match closely across different relaxation orders and across the two
approaches discussed. 
This suggests that the criteria perform well in generic cases. 

Concerning running times, both approaches are comparable. As expected, running
times increase quickly with growing dimension $n$ and with an increase in the
dimension $l$ of the pencil $B(x)$.
This is due to the fact, that the number of linearization variables grows when
these parameters are increased, as discussed in Section~\ref{sec:robust}.

Remarkably, the number of linearization variables in the moment approach does
not depend on the size $k$ of the  pencil $A(x)$. While the size of the matrix
in the resulting semidefinite program depends on $k$, the number of variables
plays a more important role. Indeed, the running times for the moment approach
are not influenced much by an increase in the size of the pencil $A(x)$. As
witnessed in Table~\ref{tab:random}, the running times for the moment approach
may even decrease slightly for larger $k$. Thus for problems with relatively
large $k$, the moment approach should be used, since it seems to be superior to
the alternative approach in this case.
\end{ex}

 From the examples discussed here and in Section~\ref{sec:diskcontain}, it is
not clear whether one of the approaches~\eqref{eq:contain_lasserre}
and~\eqref{eq:contain_sos} is globally better than the other. While the moment
approach~\eqref{eq:contain_lasserre} outperforms the sos-approach in
Example~\ref{ex:ballelliptope} for $(n,k,l)=(10,11,5)$ and, e.g., in the
experiments no. 8, 12, the sos-approach~\eqref{eq:contain_sos} is significantly
faster in the experiments no. 7, 11.

\subsection{Geometric radii}
\label{sec:radii}

Let $A(x)\in\sym_k[x]$ be a linear pencil and denote by $B(\nu,p;x)$ the normal
form~\eqref{eq:ellipsoid} of the ball $\B_{\nu}(p) \subseteq \R^n$ with radius
$\nu>0$
centered at some point $p\in\R^n$.
Consider the problem of determining whether the spectrahedron $S_A$ is contained
in $\B_{\nu}(p)$.

As seen in Section~\ref{sec:motivation}, 
$S_A$ is contained in $ \B_{\nu}(p) $ if and only if $ B(\nu,p;x) $ is
positive 
semidefinite on $ S_A $. Since $ B(\nu,p;x) \succeq 0 $ is equivalent to the 
nonnegativity of the polynomial $ \nu^2 - (x-p)^T (x-p) $, the polynomial 
optimization problem~\eqref{eq:contain_poly} can be simplified to 
\begin{align*}
\begin{split}
  \min\ &\ \nu^2 - (x-p)^T (x-p) \\
  \text{s.t.}\ &\ A(x)\succeq 0 
\end{split}
\end{align*}
Hence, $S_A \subseteq \B_\nu(p)$ for fixed $p$ with minimal possible
$\nu>0$ 
if and only if
\begin{align}
\begin{split}
  \nu^2 = \max\ &\ (x-p)^T (x-p) \\
  \text{s.t.}\ &\ A(x)\succeq 0 .
\label{eq:circum}
\end{split}
\end{align}

If the spectrahedron $S_A$ is centrally symmetric, 
i.e.,\ $ x\in S_A$ implies $-x\in S_A $,
then by choosing the origin $ p = 0 $ as the center of the ball this polynomial 
optimization problem computes the circumradius of $S_A$.
In general, computing the circumradius is a min-max-problem, 
as one has to compute the minimum of the above maximum over all $p\in\R^n$.

This also gives a certificate for boundedness of $S_A$. Indeed, $S_A$ is
bounded 
if and only if the program~\eqref{eq:circum} has a finite value.

As in Section~\ref{sec:derivation}, using a moment relaxation, we can derive a 
semidefinite hierarchy for the containment problem of a spectrahedron in
the ball $\B_{\nu}(p)$. If the unique circumcenter of $S_A$ is a priori known,
then the hierarchy yields upper bounds for the circumradius of the
spectrahedron. 
Since the objective polynomial involves only monomials of degree 2, the
relaxation performs very well; see Table~\ref{tab:circumradius} for exemplary
results on the elliptope.
For this case, the stated values of the initial relaxation are indeed optimal. 

\begin{lemma}\label{lem:elliptope}
For the elliptope in dimension $n= k(k-1)/2$ for some integer $k>2$ the
circumradius equals $\sqrt{n}$.
\end{lemma}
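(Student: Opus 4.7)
The plan is to compute the circumradius directly by establishing a matching upper bound and realization.  Parametrize the elliptope $\mathcal{E}_k \subseteq \R^n$ by the strictly upper-triangular entries $x_{ij}$ ($i<j$) of the symmetric $k \times k$ matrix $X(x)$ with $1$'s on the diagonal and $x_{ij}$ in positions $(i,j)$ and $(j,i)$.

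For the upper bound on $\|x\|$, I would use an elementary trace inequality.  The Frobenius norm satisfies $\|X\|_F^2 = \tr(X^2) = \sum_i \lambda_i(X)^2$, and since $X \succeq 0$ with unit diagonal has nonnegative eigenvalues summing to $k$, one has $\sum_i \lambda_i^2 \leq (\sum_i \lambda_i)^2 = k^2$.  On the other hand $\|X\|_F^2 = k + 2\|x\|^2$, yielding $\|x\|^2 \leq (k^2-k)/2 = n$.  Hence $\mathcal{E}_k \subseteq \B_{\sqrt{n}}(0)$.

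For the matching lower bound, the all-ones matrix $J = \mathds{1}\mathds{1}^T$ is rank-one positive semidefinite with unit diagonal, so the corresponding point $x^* = (1,\ldots,1)$ lies in $\mathcal{E}_k$ and attains $\|x^*\| = \sqrt{n}$.  Thus $\sqrt{n}$ is the smallest radius of an origin-centered ball containing $\mathcal{E}_k$.

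To upgrade this to the unrestricted circumradius, I would argue by symmetry that the origin is in fact the optimal center.  Any diagonal signing $D = \diag(\varepsilon_1,\ldots,\varepsilon_k)$ with $\varepsilon_i \in \{\pm 1\}$ induces an isometry of $\R^n$ via $x_{ij} \mapsto \varepsilon_i \varepsilon_j x_{ij}$ that preserves $\mathcal{E}_k$, since $X \mapsto DXD$ preserves positive semidefiniteness and unit diagonal.  By uniqueness of the minimum enclosing ball of a bounded set, its center must be fixed by every such reflection; choosing $\varepsilon_{i_0} = -1$ and all other $\varepsilon_i = +1$ forces $p_{i_0 j} = 0$ for every $j \neq i_0$, and letting $i_0$ range over $\{1,\ldots,k\}$ gives $p = 0$.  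This last step is the main obstacle — everything else reduces to standard facts about positive semidefinite matrices with unit diagonal once the trace identity $\|X\|_F^2 = k + 2\|x\|^2$ is noted.
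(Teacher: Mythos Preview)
Your proposal is correct and follows the same three-part structure as the paper: an upper bound on $\|x\|$ over the elliptope, attainment at the all-ones point, and a switching-symmetry argument to identify the circumcenter as the origin. Your handling of the center step is in fact more explicit than the paper's, which merely asserts (citing Laurent--Poljak) that the origin is the unique fixed point of the switching action and hence must be the circumcenter.

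The one substantive difference is the upper bound. The paper uses only the $2\times 2$ principal minors: for each pair $i<j$ the constraint $\det\begin{pmatrix}1 & x_{ij}\\ x_{ij} & 1\end{pmatrix} = 1 - x_{ij}^2 \ge 0$ holds, and summing over all $n$ pairs immediately gives $\|x\|^2 \le n$. Your route via $\sum_i \lambda_i^2 \le (\sum_i \lambda_i)^2 = k^2$ and $\|X\|_F^2 = k + 2\|x\|^2$ reaches the same bound but invokes the full spectrum of $X$, whereas the paper's argument shows that the $2\times 2$ constraints alone already force the elliptope into the cube $[-1,1]^n$ and hence into $\B_{\sqrt{n}}(0)$. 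Both are short; the paper's is more elementary, while yours exhibits the equality case (rank-one $X$) more transparently through the inequality $\sum \lambda_i^2 \le (\sum \lambda_i)^2$.
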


\begin{proof}
Since the origin is the only point of the elliptope which is invariant under
the switching symmetry~\cite{laurent1995}, it is the center point of the
smallest enclosing ball. 

Let $x\in S_A$, where $S_A$ is the elliptope given by the linear pencil
\begin{equation} \label{eq:elliptope}
  A(x) \ = \ I_{n}
       + \sum_{1 \le i < j \le k}
        x_{ij}(E_{i,j}+E_{j,i}) .
\end{equation}
Consider the $2\times 2$ principal minors of $A(x)$. Then $ 1 - x_i^2 \geq 0 $
for all $i=1,\ldots,n$. 
Summing up yields $n-\sum_{i=1}^n x_i^2 \geq 0$, and hence
$x\in\B_{\sqrt{n}}(0)$.
On the other hand, since every principal submatrix of $A(\mathds{1})$ is an
all-one-matrix, and hence the determinant (of every principal minor) vanishes,
we get $\mathds{1}\in\partial S_A$. (Note that the linear pencil is
reduced in the sense of Proposition~\ref{prop:reduced_pencil}). Thus
$\mathds{1}\in \partial\B_{\sqrt{n}}(0)\cap\partial S_A$, implying the claim.
\end{proof}

Note that the hierarchical approach provides an improvement over the solitary
relaxation (``matricial radius'') studied by Helton, Klep, and
McCullough~\cite{Helton2010}.

\begin{table}
\begin{tabular}{lr|rrr}
  \toprule
  $S_A$ & $n$ &  $\nu^2(2)$ &sec \\
  \midrule
  elliptope & 3 & 3.00 & 0.51  \\
   &  6 &   6.00 & 0.51 \\
   & 10 &  10.00 & 1.02  \\
   & 15 &  15.00 & 16.43 &  \\
  \bottomrule
\end{tabular}
\\[+0.5ex]
\caption{Circumradius of the elliptope. Here $\nu^2(2)$ denotes
the numerical optimal value of the moment relaxation of order $t=2$.}
\label{tab:circumradius}
\end{table}

\begin{rem}
As seen by a standard example in semidefinite programming 
(see, e.g., \cite{Alizadeh93,Ramana1995}), there exists a spectrahedron 
whose elements have a coordinate of double-exponential size in the number 
of variables and hence double-exponential distance (to the origin) in the 
number of variables. Therefore we cannot in general expect to attain a 
certificate for the boundedness of the spectrahedron that is polynomial 
in the input size.
\end{rem}

\bibliography{containment2arxiv}
\bibliographystyle{plain}

\end{document}